\newtheorem*{theoA}{Theorem A}
\newtheorem{theo}{Theorem}[section]
\newtheorem{lem}{Lemma}[section]
\newtheorem{exam}{Example}[section]
\newtheorem{ques}{Question}[section]
\newtheorem{open problem}{Open problem}[section]
\newcommand{\pa}{\partial}
\newcommand{\ol}{\overline}
\newcommand{\be}{\begin{equation}}
\newcommand{\ee}{\end{equation}}
\newcommand{\bs}{\begin{small}}
\newcommand{\es}{\end{small}}
\newcommand{\beas}{\begin{eqnarray*}}
\newcommand{\eeas}{\end{eqnarray*}}
\newcommand{\bea}{\begin{eqnarray}}
\newcommand{\eea}{\end{eqnarray}}
\renewcommand{\epsilon}{\varepsilon}
\numberwithin{equation}{section}
\begin{document}
\title[Bohr inequalities]
{Bohr inequalities for holomorphic mappings in higher-dimensional complex Banach spaces }
\author[V. Allu, R. Biswas and R. Mandal]{Vasudevarao Allu, Raju Biswas and Rajib Mandal}
\date{}
\address{Vasudevarao Allu, Department of Mathematics, School of Basic Science, Indian Insti
tute of Technology Bhubaneswar, Bhubaneswar-752050, Odisha, India.}
\email{avrao@iitbbs.ac.in}
\address{Raju Biswas, Department of Mathematics, Raiganj University, Raiganj, West Bengal-733134, India.}
\email{rajubiswasjanu02@gmail.com}
\address{Rajib Mandal, Department of Mathematics, Raiganj University, Raiganj, West Bengal-733134, India.}
\email{rajibmathresearch@gmail.com}
\let\thefootnote\relax
\footnotetext{2020 Mathematics Subject Classification: 32A05, 32A10, 32K05, 32M15.}
\footnotetext{Key words and phrases: Bohr radius, Holomorphic mappings, Homogeneous polynomial expansion, Complex Banach space.}
\begin{abstract} In this paper, we investigates the Bohr phenomenon for holomorphic mappings $F$ from the unit ball $\mathbb{B}_X$ of a complex Banach space $X$ into 
the closure of the unit polydisc $\mathbb{D}^m$ within the space $\mathbb{C}^m$. First, we prove an improved Bohr inequality involving the squared norms of the mapping and its homogeneous expansions. Second, we derive a refined Bohr inequality that incorporates a combination of the coefficient norms and their squares. Finally, we obtain a refined Bohr inequality for compositions $F\circ \nu$, where $\nu$ is a Schwarz mapping with a zero of order $k$ at the origin. For each result, we demonstrate that the derived Bohr radius is sharp. 
\end{abstract}
\maketitle
\section{ Introduction and preliminaries}
\noindent Let $f(z)=\sum_{n=0}^{\infty} a_nz^n$ be a holomorphic function in the unit disk $\mathbb{D}:=\{z\in\mathbb{C}:|z|<1\}$. The classical Bohr's theorem states that if 
$|f(z)|\leq 1$ in $\Bbb{D}$, then 
\be\label{e1}\sum_{n=0}^{\infty}| a_n| r^n\leq 1\quad\text{for}\quad |z|=r\leq 1/3.\ee
In this context, the number $1/3$ is known as the Bohr radius and cannot be further improved (see \cite{B1914}). The inequality (\ref{e1}) is known as the classical Bohr inequality. 
Bohr's \cite{B1914} original proof of the inequality (\ref{e1}) was restricted to situations where $r$ did not exceed $1/6$. Later, Schur, Riesz and Wiener independently proved that the inequality (\ref{e1}) is valid for $r\leq 1/3$ (see \cite{B1914}).\\[2mm]
\indent  In addition to the Bohr radius, the Rogosinski radius is another significant concept (see \cite{R1923}). Kayumov {\it et al.} \cite{KKP2021} have defined the Bohr-Rogosinski radius as the greatest value $r_0 $ in $(0,1)$ such that the inequality $R_M^f(z) \leq 1$ is valid for all points within the disk $|z|\leq r_0$, where
 \beas R_M^f(z):=|f(z)|+\sum_{n=M}^\infty |a_n| |z|^n,\quad M\in\mathbb{N}\eeas
 for a holomorphic function $f(z)=\sum_{n=0}^\infty a_n z^n$ in $\mathbb{D}$ with $|f(z)|\leq 1$ in $\mathbb{D}$. Furthermore, the authors \cite{KKP2021} derived the following result concerning the Bohr-Rogosinski radius.
\begin{theoA}\cite[Theorems 1 and 2, p. 3]{KKP2021}
Let $f(z)=\sum_{n=0}^\infty a_n z^n$ be a holomorphic mapping in $\mathbb{D}$ with $|f(z)|\leq 1$ in $\mathbb{D}$. For $k,M\in\mathbb{N}$ and $p\in\{1,2\}$, we have 
\beas \left|f\left(z^k\right)\right|^p+\sum_{n=M}^\infty |a_n| |z|^n\leq 1 \quad\text{for}\quad |z|\leq r_{k,p,M},\eeas
where $r_{k,p,M}\in (0, 1)$ is the unique root of the equation $(2/p)r^M (1+r^k)-(1-r)(1-r^k)=0$. The number $r_{k,p,M}$ cannot be improved.
\end{theoA}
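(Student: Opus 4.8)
The plan is to reduce the estimate to a one-variable problem in the two quantities $a:=|f(0)|$ and $r:=|z|$, treating the two summands separately. First I would control the point-evaluation term: since $\|f\|_\infty\le 1$ with $f(0)=a_0$, the Schwarz--Pick lemma applied to $w\mapsto f(w)$ at $w=z^k$ gives
\[
|f(z^k)|\le \frac{a+r^k}{1+a\,r^k}.
\]
For the tail I would invoke the classical Wiener-type coefficient estimate $|a_n|\le 1-a^2$ valid for every $n\ge 1$ (the same inequality that underlies the proof of the classical Bohr theorem), and sum the geometric series to obtain $\sum_{n\ge M}|a_n|r^n\le (1-a^2)\,r^M/(1-r)$.

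Combining these, it suffices to show that for $p\in\{1,2\}$ the function
\[
\Phi_p(a):=\left(\frac{a+r^k}{1+a r^k}\right)^{p}+(1-a^2)\frac{r^M}{1-r}
\]
satisfies $\Phi_p(a)\le 1$ for all $a\in[0,1]$. The crucial algebraic step is the factorization
\[
\Phi_1(a)-1=(1-a)\left[(1+a)\frac{r^M}{1-r}-\frac{1-r^k}{1+ar^k}\right],\qquad
\Phi_2(a)-1=(1-a^2)\left[\frac{r^M}{1-r}-\frac{1-r^{2k}}{(1+ar^k)^2}\right].
\]
In each case the bracket is increasing in $a$ on $[0,1]$ (the Schwarz--Pick contribution grows, while the decreasing tail has been absorbed into the common factor $1-a$ or $1-a^2$), so its supremum is attained as $a\to 1$. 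Evaluating the bracket at $a=1$, using $1-r^{2k}=(1-r^k)(1+r^k)$, and clearing denominators reduces $\Phi_p\le 1$ to exactly $(2/p)\,r^M(1+r^k)\le (1-r)(1-r^k)$.

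Existence and uniqueness of $r_{k,p,M}$ then follow by writing this condition as $G(r):=(2/p)\,r^M(1+r^k)/\big((1-r)(1-r^k)\big)\le 1$; as a product of positive increasing factors on $(0,1)$, the map $G$ increases strictly from $0$ to $+\infty$, so $G=1$ has a unique root $r_{k,p,M}\in(0,1)$ and the inequality holds precisely for $r\le r_{k,p,M}$. For sharpness I would test the disk automorphism $\phi_a(z)=(a-z)/(1-az)$, whose coefficients satisfy $|a_n|=a^{n-1}(1-a^2)$, evaluated at a point $z_0$ with $|z_0|=r$ and $z_0^k=-r^k$, so that $\phi_a(z_0^k)=(a+r^k)/(1+ar^k)$ and the tail equals $(1-a^2)a^{M-1}r^M/(1-ar)$. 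The same factorization shows that the left-hand side minus $1$ equals $(1-a)[\,\cdot\,]$ (resp. $(1-a^2)[\,\cdot\,]$), whose bracket tends as $a\to 1^-$ to a quantity that is strictly positive exactly when $r>r_{k,p,M}$; hence the inequality fails beyond $r_{k,p,M}$.

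The main obstacle I anticipate is the reduction to the endpoint $a=1$: one must verify that the increasing Schwarz--Pick term and the decreasing tail combine so that the worst tradeoff occurs precisely at $a=1$ (this is what collapses the two-parameter inequality onto the single stated equation), and, in the sharpness construction, that the phase choice $z_0^k=-r^k$ makes both terms simultaneously extremal so that the family $\phi_a$ genuinely saturates the bound as $a\to 1^-$.
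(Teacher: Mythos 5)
The paper does not actually prove Theorem A: it is quoted verbatim from the cited reference \cite{KKP2021} as background, so there is no in-paper proof to compare against. Your argument is nevertheless correct and complete, and it is essentially the standard proof of this result (and the same template the paper itself uses for its Theorems 3.1--3.3): the Schwarz--Pick bound $|f(z^k)|\le (a+r^k)/(1+ar^k)$ for the evaluation term, Wiener's inequality $|a_n|\le 1-a^2$ summed geometrically for the tail, the factorizations of $\Phi_p(a)-1$ with a bracket that is increasing in $a$ so that the worst case collapses onto $a\to 1^-$ and yields exactly $(2/p)r^M(1+r^k)\le(1-r)(1-r^k)$, monotonicity of $G(r)$ for existence and uniqueness of the root, and the automorphisms $\phi_a$ evaluated at $z_0=re^{i\pi/k}$ (so that $z_0^k=-r^k$) for sharpness as $a\to1^-$. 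I verified the two factorizations, the monotonicity claims, and the sharpness computation; all are correct, and the phase choice $z_0^k=-r^k$ indeed makes the evaluation term extremal while the tail, depending only on $|a_n|$ and $|z_0|$, is unaffected by phases.
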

\noindent Recently, there has been a notable increase in research focusing on Bohr inequalities in various abstract settings, such as ramification, refinement, and extension. For an 
in-depth study of the Bohr radius in one complex variable, we refer to 
\cite{KKP2020, AKP2019,AA2023,BB2004,1KP2018,KP2018, LP2023, ABM2025, BM2024,PVW2020, PW2020,1AH2022, KP2017, IKP2020, KPS2018} and the references therein.\\[2mm]
\indent To organize and present our results coherently, it is necessary to introduce basic notations at this preliminary stage. For $z=(z_1,z_2,\cdots,z_n)\in\mathbb{C}^n$, let $\Vert z\Vert_\infty=\max_{i=1,2,\cdots,n}|z_i|$ denote the maximum norm and let
\beas \mathbb{D}^n =\left\{z=(z_1,z_2,\cdots,z_n)\in\mathbb{C}^n: \Vert z\Vert_\infty < 1\right\}\eeas
denote the unit polydisc in $\mathbb{C}^n$. Let $X$ and $Y$ be two complex Banach spaces. A mapping $P: X\to Y$ is called a homogeneous polynomial of degree $k\in\mathbb{N}$ if there exists a $k$-linear mapping 
$f :\underbrace{X\times X\times\ldots\times X}_{k\text{-times}}\to Y$ such that $P(x)=f(x,x,\ldots,x)$ for every $x\in X$. Note that if $P$ is a homogeneous polynomial of degree $k$, then $P(\mu x)=\mu^k P(x)$ for all $x\in X$ and $\mu\in \mathbb{C}$. Note that if $P_k$ is a $k$-homogeneous polynomial from $X$ into $Y$, there exists a unique symmetric $k$-linear mapping $f$ such that $P_k(x)=f(x,...,x)$ (see \cite[Chapter 1 and 2]{M1986}). \\[2mm]
\indent For a domain $\mathcal{D}\subset X$, let $F: \mathcal{D}\to Y$ be a holomorphic mapping and let $D^kF(x)$ denote the $k$-th Fr\'echet derivative of $F$ at $x\in\mathcal{D}$, which is a bounded symmetric $k$-linear mapping from $\underbrace{X\times X\times\ldots\times X}_{k\text{-times}}$ to $Y$. If $\mathcal{D}$ contains the origin, then
 any holomorphic mapping $F : \mathcal{D} \to Y$ can be expanded into the following series:
 \bea\label{e2} F(z) =\sum_{k=0}^\infty \frac{1}{k!} D^kF(0)(z^k)\eea
 for all $z$ in some neighborhood of the origin (see \cite[Chapter 6, p. 198]{GK2003}). Note that 
 \beas  D^0 F(0)(z^0)=F(0)\quad\text{and} \quad D^kF(0)(z^k)= D^kF(0)\underbrace{(z,z,\ldots,z)}_{k\text{-times}},\quad k\geq 1.\eeas 
 As $(1/k!)D^kF(0)(z^k)$ is a bounded symmetric $k$-linear mapping, we use the notation $P_k(z)=(1/k!)D^kF(0)(z^k)$ throughout the paper. 
 In the context of a balanced domain $\mathcal{D}$, the series (\ref{e2}) converges uniformly to $f$ in some neighborhood of each compact subset contained in 
 $\mathcal{D}$. Furthermore, if the function $f$ is bounded on the ball $B(z, r)\subseteq \mathcal{D}$, then uniform convergence is guaranteed on $B(z, t)$ for any $0 < t < r$ (see \cite[Theorems 7.11 and 7.13]{M1986}).\\[2mm]
\indent  Let $L(X, \mathbb{C})$ denote the space of all continuous linear functionals from $X$ to $\mathbb{C}$. For each $x\in X \setminus\{0\}$, let
 \beas T(x)=\left\{l_x \in L(X, \mathbb{C}): l_x(x)=\Vert x\Vert, ~\Vert l_x\Vert =1\right\},\eeas
 where $\Vert l_x\Vert=\sup\{|l_x(w)| : \Vert w\Vert=1\}$.
In view of the Hahn-Banach theorem, $T(x)$ is nonempty (see \cite[p. 202]{GK2003}). \\[2mm]
\indent  In 2018, Kayumov and Ponnusamy \cite{KP2018} obtained several Bohr inequalities for holomorphic functions with lacunary series in the unit disc $\mathbb{D}$.  The study of Bohr's phenomenon in the context of higher-dimensional complex Banach spaces represents a significant advancement in the field of complex analysis.
 In 2020, Liu and Liu \cite{LL2020} made significant contributions to the field by extending the results of Kayumov and Ponnusamy \cite{KP2018} to holomorphic mappings $f$ with 
 lacunary series from the unit polydisc $\mathbb{D}^m$ into $\mathbb{D}^m$ and from the unit ball $\Bbb{B}_X$ of a complex Banach space $X$ into $\Bbb{B}_X$. 
For an in-depth study of Bohr's phenomenon concerning holomorphic or pluriharmonic mappings with values in higher-dimensional complex Banach spaces using the homogeneous polynomial expansions, we refer to
 \cite{H2023, 1HH2024, KPW2025, A2000, HHK2009, HHK2025}.
Lin {\it et al.} \cite{LLP2023} have studied other types of Bohr's inequality for holomorphic mappings $f$ with lacunary series from the unit polydisc $\mathbb{D}^m$ into $\mathbb{D}^m$.\\[2mm]
\indent The aforementioned studies primarily explore mappings between domains of the same type, such as from a ball to another ball or from a polydisc to another polydisc, using their inherent symmetries. The case of mappings from a Banach space ball into a polydisc, {\it i.e.,} $F:\mathbb{B}_X\to\ol{\mathbb{D}^m}$, presents a distinct challenge due to the structural asymmetry between the domain and the codomain. This setting disrupts the standard techniques and introduces a critical dependence on the geometry of the initial point $F(0)=a$.
This paper addresses this gap by providing a comprehensive study of the Bohr phenomenon in this general and asymmetric setting.
The following question naturally arises.
 \begin{ques}\label{Q1}Let $\mathbb{B}_X$ be the unit ball of a complex Banach space $X$.
 To what extent can the classical Bohr inequality be generalized to derive sharp versions for holomorphic mappings $F:\mathbb{B}_X\to\ol{\mathbb{D}^m}$, and how does the geometry of the initial point $F(0)$ affect the associated Bohr radius?
 \end{ques}
  \noindent The purpose of this paper is to provide an affirmative answer to Question \ref{Q1}.
\section{Key lemmas}
\noindent The following are key lemmas of this paper and will be used to prove the main results.
\begin{lem}\cite{DP2008}\label{lem1} Suppose $f$ is analytic in $\mathbb{D}$ with $|f(z)|\leq1$ in $\Bbb{D}$, then we have
\beas \frac{\left|f^{(n)}(z)\right|}{n!}\leq \frac{1-|f(z)|^2}{(1-|z|)^{n-1}(1-|z|^2)}\quad\text{and}\quad |a_n|\leq 1-|a_0|^2\quad\text{for}\quad n\geq 1,\quad |z|<1.\eeas\end{lem}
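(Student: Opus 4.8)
The plan is to prove the two assertions in turn, deriving the derivative estimate from the coefficient estimate, after disposing of a degenerate case. If $|f(z_0)|=1$ at some interior point $z_0$, then by the maximum modulus principle $f$ is a unimodular constant, so every $f^{(n)}$ vanishes and both sides of the first inequality equal $0$; hence I may assume $|f(z)|<1$ throughout.

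For the coefficient bound $|a_n|\le 1-|a_0|^2$ I would reduce to the first-derivative case by a root-of-unity decimation. Fixing $n\ge 1$ and $\omega=e^{2\pi i/n}$, I average $g(z)=\frac{1}{n}\sum_{j=0}^{n-1}f(\omega^j z)$, which annihilates every Taylor coefficient except those indexed by $n\mathbb{Z}$, so that $g(z)=G(z^n)$ with $G(w)=a_0+a_n w+a_{2n}w^2+\cdots$ and $|G|\le 1$ on $\mathbb{D}$ (as $g$ is an average of maps bounded by $1$). Thus $G:\mathbb{D}\to\overline{\mathbb{D}}$ satisfies $G(0)=a_0$ and $G'(0)=a_n$, and it remains to establish the Schwarz--Pick inequality at the origin, $|G'(0)|\le 1-|G(0)|^2$. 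This follows by composing with a disc automorphism: $H=(G-a_0)/(1-\overline{a_0}G)$ maps $\mathbb{D}$ into $\mathbb{D}$ with $H(0)=0$, the Schwarz lemma gives $|H'(0)|\le 1$, and differentiating yields $H'(0)=G'(0)/(1-|a_0|^2)$, which rearranges to the claim.

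For the derivative bound I would transport the estimate to a general $z$ by composing with the automorphism $\phi_z(w)=(w+z)/(1+\bar z w)$, which sends $0$ to $z$. Writing $h=f\circ\phi_z:\mathbb{D}\to\overline{\mathbb{D}}$ with Taylor coefficients $c_k$, the coefficient bound just proved gives $|c_k|\le 1-|h(0)|^2=1-|f(z)|^2$ for $k\ge 1$. Since $f(\zeta)=h(\phi_z^{-1}(\zeta))$ with $\phi_z^{-1}(\zeta)=(\zeta-z)/(1-\bar z\zeta)$, I expand $\phi_z^{-1}$ in powers of $u=\zeta-z$, namely $\phi_z^{-1}=\sum_{m\ge 1}\frac{\bar z^{\,m-1}}{(1-|z|^2)^m}u^m$, and extract the coefficient of $u^n$ in $f(\zeta)=\sum_k c_k(\phi_z^{-1})^k$. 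Counting compositions of $n$ into $k$ positive parts (there are $\binom{n-1}{k-1}$ of them, each contributing $\bar z^{\,n-k}/(1-|z|^2)^n$) produces the exact identity
\[
\frac{f^{(n)}(z)}{n!}=\frac{1}{(1-|z|^2)^n}\sum_{k=1}^n \binom{n-1}{k-1}\bar z^{\,n-k}c_k.
\]
Applying $|c_k|\le 1-|f(z)|^2$ and the binomial sum $\sum_{k=1}^n\binom{n-1}{k-1}|z|^{n-k}=(1+|z|)^{n-1}$ then collapses the factor $(1+|z|)^{n-1}/(1-|z|^2)^n$ to $1/[(1-|z|)^{n-1}(1-|z|^2)]$, which is precisely the asserted bound.

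The main obstacle is the coefficient-extraction step: correctly expanding the inverse automorphism and identifying the coefficient of $u^n$ in the $k$-th power, where the bookkeeping via integer compositions yields the binomial weights $\binom{n-1}{k-1}$ together with the clean powers of $\bar z$ and $(1-|z|^2)$. Once that identity is secured, the remaining estimates reduce to the elementary binomial summation and algebraic simplification indicated above.
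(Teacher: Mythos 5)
The paper gives no proof of this lemma; it is quoted verbatim from the cited reference of Dai and Pan. Your argument is correct and complete, and it is essentially the standard route to this estimate: the decimation average $g(z)=\frac1n\sum_j f(\omega^j z)$ reduces the coefficient bound $|a_n|\le 1-|a_0|^2$ to Schwarz--Pick at the origin, and transporting that bound through the automorphism $\phi_z$ with the composition count $\binom{n-1}{k-1}$ yields exactly the identity $\frac{f^{(n)}(z)}{n!}=(1-|z|^2)^{-n}\sum_{k=1}^n\binom{n-1}{k-1}\bar z^{\,n-k}c_k$, after which the binomial summation gives the stated constant. All steps check out (the rearrangement of the double series is justified for $|u|$ small, which suffices since both sides are Taylor coefficients at $z$), so there is nothing to correct.
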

\begin{lem}\cite{KP2017}\label{lem2} If $f(z)=\sum_{n=0}^\infty a_n z^n$ is analytic in $\mathbb{D}$ such that $|g(z)|\leq 1$ in $\mathbb{D}$ and $|a_0|<1$, then we have
\beas \sum_{n=1}^\infty |a_n|^2 r^{pn}\leq r^p\frac{(1-|a_0|^2)^2}{1-|a_0|^2 r^p},\quad \text{where}\quad p\in\mathbb{N}\quad\text{and}\quad 0<r\leq 1. \eeas 
\end{lem}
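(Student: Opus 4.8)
The plan is to reduce everything to the classical Littlewood subordination theorem after a M\"obius normalization. (I read the hypothesis as $|f(z)|\le 1$ in $\mathbb{D}$, so that the disk automorphism below is well defined.) First note that the exponent $p$ plays no independent role: writing $\rho=r^{p/2}\in(0,1]$ for $r\in(0,1]$, Parseval's identity applied to $f(z)-a_0=\sum_{n\ge1}a_n z^n$ gives
\[
\sum_{n=1}^\infty |a_n|^2 r^{pn}=\sum_{n=1}^\infty |a_n|^2 \rho^{2n}=\frac{1}{2\pi}\int_0^{2\pi}\bigl|f(\rho e^{i\theta})-a_0\bigr|^2\,d\theta ,
\]
so it suffices to bound this integral mean by $\rho^2(1-|a_0|^2)^2/(1-|a_0|^2\rho^2)$.

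Next I would M\"obius-normalize. Since $|a_0|<1$, the map $w\mapsto (w-a_0)/(1-\overline{a_0}w)$ is an automorphism of $\mathbb{D}$, so
\[
\omega(z):=\frac{f(z)-a_0}{1-\overline{a_0}\,f(z)}
\]
is a holomorphic self-map of $\mathbb{D}$ with $\omega(0)=0$, i.e.\ a Schwarz function. Inverting this relation yields the key identity
\[
f(z)-a_0=(1-|a_0|^2)\,\frac{\omega(z)}{1+\overline{a_0}\,\omega(z)}=(1-|a_0|^2)\,(\Phi\circ\omega)(z),\qquad \Phi(w):=\frac{w}{1+\overline{a_0}w}.
\]
Because $|a_0|<1$ and $|\omega|<1$, the denominator $1+\overline{a_0}\omega$ never vanishes, so $\Phi\circ\omega$ is holomorphic on $\mathbb{D}$ with $(\Phi\circ\omega)(0)=\Phi(0)=0$. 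Hence $\Phi\circ\omega\prec\Phi$ in $\mathbb{D}$, the subordinating map being $\omega$ itself.

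The heart of the argument is then a single application of Littlewood's subordination theorem, which states that $g\prec h$ in $\mathbb{D}$ implies $\int_0^{2\pi}|g(\rho e^{i\theta})|^2\,d\theta\le\int_0^{2\pi}|h(\rho e^{i\theta})|^2\,d\theta$ for every $\rho\in(0,1)$. Applying this with $g=\Phi\circ\omega$ and $h=\Phi$ and combining with the two displays above gives
\[
\sum_{n=1}^\infty|a_n|^2 r^{pn}\le(1-|a_0|^2)^2\cdot\frac{1}{2\pi}\int_0^{2\pi}\bigl|\Phi(\rho e^{i\theta})\bigr|^2\,d\theta .
\]
It remains to evaluate the extremal mean explicitly: expanding $\Phi(z)=\sum_{n\ge1}(-\overline{a_0})^{n-1}z^n$ and using Parseval once more,
\[
\frac{1}{2\pi}\int_0^{2\pi}\bigl|\Phi(\rho e^{i\theta})\bigr|^2\,d\theta=\sum_{n=1}^\infty|a_0|^{2(n-1)}\rho^{2n}=\frac{\rho^2}{1-|a_0|^2\rho^2}=\frac{r^p}{1-|a_0|^2 r^p},
\]
which produces exactly the asserted bound $r^p(1-|a_0|^2)^2/(1-|a_0|^2 r^p)$. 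The boundary case $r=1$ (that is $\rho=1$) follows by letting $\rho\uparrow1$, both sides being continuous there (monotone convergence on the left, direct continuity on the right).

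I expect the only genuinely delicate point to be the verification that $\Phi\circ\omega\prec\Phi$ with the correct normalization $(\Phi\circ\omega)(0)=\Phi(0)=0$, since this is precisely what licenses Littlewood's theorem; once the subordination is in place the estimate is sharp, with the extremal $\omega(z)=z$ (equivalently $f$ a single Blaschke factor) saturating every inequality and explaining the exact constant. If one prefers not to invoke Littlewood's theorem as a black box, the same mean-value inequality for functions subordinate to $\Phi$ can instead be deduced from Rogosinski's coefficient inequalities $\sum_{n=1}^{N}|g_n|^2\le\sum_{n=1}^{N}|\Phi_n|^2$, which encode the same fact.
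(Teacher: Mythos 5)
The paper states this lemma as a quoted result from \cite{KP2017} and gives no proof of its own, so there is no internal argument to compare against; judged on its own terms, your proof is correct. The reduction $\rho=r^{p/2}$ correctly disposes of the exponent $p$, the identity $f-a_0=(1-|a_0|^2)\,\Phi\circ\omega$ with $\Phi(w)=w/(1+\overline{a_0}w)$ checks out, the subordination $\Phi\circ\omega\prec\Phi$ is immediate from the composition form (no univalence of $\Phi$ is needed for Littlewood's theorem), the Parseval evaluation of the extremal mean gives exactly $\rho^2/(1-|a_0|^2\rho^2)$, and the passage to $\rho=1$ is handled properly. You also correctly read the hypothesis ``$|g(z)|\leq 1$'' as a typo for ``$|f(z)|\leq 1$''; without that reading the statement is vacuous. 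This subordination-plus-Parseval route is essentially the standard derivation of this coefficient estimate in the literature, so nothing further is required.
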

\begin{lem}\cite{PVW2020}\label{2lem2} If $f(z)=\sum_{n=0}^\infty a_n z^n$ is analytic in $\mathbb{D}$ such that $|g(z)|\leq 1$ in $\mathbb{D}$ and $|a_0|<1$, then we have
\beas \sum_{n=1}^\infty |a_n| r^{n}+\left(\frac{1}{1+|a_0|}+\frac{r}{1-r}\right) \sum_{n=1}^\infty |a_n|^2 r^{2n}\leq (1-|a_0|^2)\frac{r}{1-r},\quad \text{where}\quad 0\leq r<1. \eeas 
\end{lem}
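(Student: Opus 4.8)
The plan is to use the sharpness of the statement as a compass. Writing $a=|a_0|$ and $b_n=|a_n|$, every quantity appearing in the inequality depends only on $a$ and the $b_n$, so after multiplying $f$ by a unimodular constant I may assume $a_0=a\in[0,1)$. For the M\"obius map $f(z)=(a+z)/(1+az)$ one has $b_n=(1-a^2)a^{n-1}$, and a direct computation turns the asserted bound into the algebraic identity (after cancelling the common factor $1-a^2$)
\[ \frac{r}{1-ar}+\frac{(1-a)r^2}{(1-ar)(1+ar)}+\frac{(1-a^2)r^3}{(1-r)(1-ar)(1+ar)}=\frac{r}{1-r}. \]
Thus the inequality is an equality for the entire M\"obius family; this simultaneously establishes sharpness and dictates precisely how much slack each estimate is allowed to consume.

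Set $S_1=\sum_{n\ge1}b_nr^n$ and $S_2=\sum_{n\ge1}b_n^2r^{2n}$, and split the left-hand side as $S_1+\tfrac{1}{1+a}S_2+\tfrac{r}{1-r}S_2$. I would first dispose of the pure tail term by Lemma \ref{lem2} with $p=2$, which gives $S_2\le(1-a^2)^2r^2/(1-a^2r^2)$ and is itself an equality for the M\"obius maps, hence lossless. A short cancellation then shows that the whole statement follows once we prove the joint estimate
\[ \sum_{n=1}^\infty b_nr^n+\frac{1}{1+a}\sum_{n=1}^\infty b_n^2r^{2n}\le(1-a^2)\,\frac{r(1+r)}{1-a^2r^2}, \]
which is again sharp for the M\"obius family.

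The crux, and the main obstacle, is this last estimate, in which $S_1$ and $S_2$ are genuinely coupled and must be handled together. One cannot bound them separately: replacing $b_n$ by $1-a^2$ (Lemma \ref{lem1}) termwise, or $\tfrac{1}{1+a}b_n^2$ by $(1-a)b_n$, already overshoots the right-hand side when $a>0$, since those bounds are tight only at $n=1$ and the discrepancy accumulates along the tail; and one cannot replace $S_2$ by its maximal value either, because there are Schwarz functions (for instance $f(z)=z(z+t)/(1+tz)$ when $a=0$) for which $S_1$ alone already exceeds its M\"obius value while $S_2$ is correspondingly smaller. To encode this trade-off I would pass to the Schur transform $\omega=(f-a)/(1-af)$, a holomorphic self-map of $\mathbb{D}$ with $\omega(0)=0$, and use $f=(a+\omega)/(1+a\omega)$ to express the $b_n$ through the coefficients of $\omega$; the constant $1/(1+a)$ then arises naturally from $f'(0)=(1-a^2)\omega'(0)$ together with the Schur recursion. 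Inserting the resulting coefficient bounds for $\omega$ (equivalently, Lemma \ref{lem2} applied to $\omega$) reduces the joint estimate, for each fixed $a$, to a single-variable inequality in $r$ that I would settle by elementary monotonicity, with equality exactly when $\omega(z)=z$, that is, when $f$ is a M\"obius map. Combined with the tail bound this proves the lemma, and the identity above certifies that the constants and the radius are best possible.
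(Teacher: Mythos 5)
The paper does not prove this lemma; it is quoted verbatim from \cite{PVW2020}, so your argument has to stand on its own, and it does not: the reduction in your second paragraph is already fatally lossy. You bound the term $\frac{r}{1-r}S_2$ from above by $\frac{r}{1-r}\cdot\frac{(1-a^2)^2r^2}{1-a^2r^2}$ and call this ``lossless'' because it is an equality for the M\"obius family; but $S_2$ sits on the \emph{left-hand} side with a positive coefficient, so for any non-extremal $f$ the substitution strictly inflates the left side, and equality on the extremal family says nothing about whether that inflation stays within the available margin. Your resulting ``joint estimate'' $S_1+\frac{1}{1+a}S_2\le(1-a^2)r(1+r)/(1-a^2r^2)$ is in fact \emph{false}: take the very function you mention, $f(z)=z(z+t)/(1+tz)$ with $t=1/2$, so $a=0$, $|a_1|=1/2$ and $|a_n|=\frac34(1/2)^{n-2}$ for $n\ge2$. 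At $r=9/10$ one computes $S_1=\frac{9}{20}+\frac34\cdot\frac{(9/10)^2}{1-9/20}\approx 1.554$ and $S_2=\frac{81}{400}+\frac{9}{16}\cdot\frac{(81/100)^2}{1-81/400}\approx 0.665$, so $S_1+S_2\approx 2.22$ while $r(1+r)=1.71$. (The lemma itself survives here because $\frac{r}{1-r}S_2\approx 5.99$, well below the $7.29$ you substitute for it.) You even articulate the exact mechanism that kills this step --- for such $f$ ``$S_1$ alone already exceeds its M\"obius value while $S_2$ is correspondingly smaller'' --- one paragraph after committing it.

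Beyond that, the step you correctly identify as the crux is never carried out: writing $f=(a+\omega)/(1+a\omega)$ expresses each $a_n$ as $(1-a^2)$ times the $n$-th coefficient of $\sum_{j\ge1}(-a)^{j-1}\omega^j$, and there is no indicated route from coefficient bounds for $\omega$ to a ``single-variable inequality in $r$''; that is precisely where the difficulty lives. The actual proof in \cite{PVW2020} does not decouple $S_1$ from $S_2$ at all: it rests on a sharpened, coefficient-by-coefficient form of Wiener's inequality $|a_n|\le 1-|a_0|^2$, in which weighted partial sums of the $|a_k|^2$ are subtracted from the right-hand side, and the lemma follows by multiplying by $r^n$ and summing; no such coefficient inequality appears in your sketch. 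Your sharpness computation (the displayed identity for the M\"obius family) is correct, but sharpness was never in doubt --- the upper bound is what needs proving, and the proposal does not prove it.
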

\begin{lem}\cite{CHPV2024}\label{lem3} Suppose that $\Bbb{B}_X$ and $\Bbb{B}_Y$ are the unit balls of the complex Banach spaces $X$ and $Y,$ respectively. Let $f: \Bbb{B}_X\to\ol{\Bbb{B}_Y}$ be a holomorphic mapping. Then,
\beas \Vert f(z)\Vert_Y\leq \frac{\Vert f(0)\Vert_Y+\Vert z\Vert_X}{1+\Vert f(0)\Vert_Y\Vert z\Vert_X}\quad \text{for}\quad z\in \Bbb{B}_X. \eeas  
This estimate is sharp with equality possible for each value of $\Vert f(0)\Vert_Y$ and for each $z\in \Bbb{B}_X$.
\end{lem}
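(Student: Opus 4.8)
The plan is to reduce this Banach-space Schwarz--Pick estimate to the classical Schwarz--Pick inequality in the unit disc, by slicing along the complex line through $0$ and $z$ and pairing the values of $f$ with a norming functional on $Y$. Two trivial cases are disposed of first: if $z=0$ the asserted inequality is an equality, and if $f(z)=0$ the left-hand side vanishes while the right-hand side is nonnegative. Hence I may assume $z\neq 0$ and $f(z)\neq 0$.

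By the Hahn--Banach theorem the set $T(f(z))$ is nonempty, so I would fix a functional $l\in L(Y,\mathbb{C})$ with $l(f(z))=\Vert f(z)\Vert_Y$ and $\Vert l\Vert=1$, and then define the single-variable function
\beas h(\lambda)=l\!\left(f\!\left(\frac{\lambda}{\Vert z\Vert_X}\,z\right)\right),\qquad \lambda\in\mathbb{D}.\eeas
Since $\lambda\mapsto (\lambda/\Vert z\Vert_X)\,z$ maps $\mathbb{D}$ into $\mathbb{B}_X$ and $f$ is holomorphic with values in $\ol{\mathbb{B}_Y}$, the composition $h$ is holomorphic on $\mathbb{D}$ and satisfies $|h(\lambda)|\leq \Vert l\Vert\,\Vert f((\lambda/\Vert z\Vert_X)z)\Vert_Y\leq 1$; thus $h$ maps $\mathbb{D}$ into $\ol{\mathbb{D}}$. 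If $\Vert h\Vert_\infty=1$ then $h$ is a unimodular constant by the maximum principle and the bound below is immediate; otherwise $h:\mathbb{D}\to\mathbb{D}$ and the classical Schwarz--Pick lemma applies.

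Next I would record the two key evaluations $h(\Vert z\Vert_X)=l(f(z))=\Vert f(z)\Vert_Y$ and $|h(0)|=|l(f(0))|\leq \Vert l\Vert\,\Vert f(0)\Vert_Y=\Vert f(0)\Vert_Y$. Applying the Schwarz--Pick estimate $|h(\lambda)|\leq (|h(0)|+|\lambda|)/(1+|h(0)|\,|\lambda|)$ at the admissible point $\lambda=\Vert z\Vert_X\in[0,1)$ gives
\beas \Vert f(z)\Vert_Y=|h(\Vert z\Vert_X)|\leq \frac{|h(0)|+\Vert z\Vert_X}{1+|h(0)|\,\Vert z\Vert_X}.\eeas
Finally, for fixed $r=\Vert z\Vert_X$ the map $t\mapsto (t+r)/(1+tr)$ has derivative $(1-r^2)/(1+tr)^2\geq 0$, hence is nondecreasing on $[0,1]$; since $|h(0)|\leq \Vert f(0)\Vert_Y$, I may enlarge $|h(0)|$ to $\Vert f(0)\Vert_Y$ on the right-hand side to recover exactly the claimed bound.

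The one genuinely essential step is the passage to one variable via the norming functional $l\in T(f(z))$: this converts the vector-valued, higher-dimensional problem into a scalar Schwarz--Pick inequality, and it crucially exploits that $l$ norms the target value $f(z)$ (so the equality $h(\Vert z\Vert_X)=\Vert f(z)\Vert_Y$ holds) while having unit norm (so $|h(0)|$ is controlled by $\Vert f(0)\Vert_Y$); everything else is the elementary monotonicity observation. For sharpness I would note that the construction is reversible in dimension one: taking $X=Y=\mathbb{C}$ and $f$ the disc automorphism that attains equality in Schwarz--Pick yields equality in the stated bound for the prescribed value of $\Vert f(0)\Vert_Y$ and the prescribed $z$, and this one-dimensional extremal embeds into the general setting along a coordinate line.
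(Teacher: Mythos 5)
The paper offers no proof of this lemma: it is imported verbatim from the cited reference [CHPV2024], so there is nothing internal to compare against. Your argument is correct and is the standard (and, as far as I can tell, the source's) route: slice along $\lambda\mapsto(\lambda/\Vert z\Vert_X)z$, compose with a norming functional $l\in T(f(z))$ so that $h(\Vert z\Vert_X)=\Vert f(z)\Vert_Y$ exactly while $|h(0)|=|l(f(0))|\le\Vert f(0)\Vert_Y$, apply the classical bound $|h(\lambda)|\le(|h(0)|+|\lambda|)/(1+|h(0)|\,|\lambda|)$ for holomorphic self-maps of the disc, and finish with the monotonicity of $t\mapsto(t+r)/(1+tr)$. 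One small slip in the case analysis: $\Vert h\Vert_\infty=1$ does not force $h$ to be a unimodular constant (consider $h(\lambda)=\lambda$); the correct dichotomy is that either $h$ is constant, in which case the bound reads $|h(0)|\le(|h(0)|+|\lambda|)/(1+|h(0)|\,|\lambda|)$ and is trivially true, or $h$ is nonconstant, in which case the open mapping theorem gives $h(\mathbb{D})\subset\mathbb{D}$ and Schwarz--Pick applies; either way the conclusion stands, so this is cosmetic. For sharpness, rather than appealing to a ``coordinate line'' (which a general Banach space does not have), it is cleaner to exhibit the extremal explicitly as $f(z)=g(l_{z_0}(z))u$ with $g(t)=(a+t)/(1+at)$, $z_0\in\pa\mathbb{B}_X$, $l_{z_0}\in T(z_0)$ and $u\in\pa\mathbb{B}_Y$ --- exactly the mappings the paper uses to prove sharpness in Theorems \ref{Th1}--\ref{Th3}.
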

\begin{lem}\label{lem4}\cite[Lemma 6.1.28, P. 200]{GK2003} Let $X$ and $Y$ be two complex Banach spaces.
Let $M>0$, $\Bbb{B}_X$ be the unit ball in the complex Banach space $X$ and $f: \Bbb{B}_X\to Y$ be a holomorphic mapping such that $f(0)=Df(0)=\cdots=D^{k-1}f(0)=0$, $k\in\mathbb{N}$ and $\Vert f(z)\Vert_Y <M$ for $z\in \Bbb{B}_X$. Then, $\Vert f(z)\Vert_Y \leq M\Vert z\Vert_X^k$ for $z\in \Bbb{B}_X$. 
\end{lem}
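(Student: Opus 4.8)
The plan is to reduce this vector-valued, infinite-dimensional statement to the classical scalar Schwarz lemma by slicing $f$ along complex lines through the origin and then dualizing via the Hahn--Banach theorem. The case $z=0$ is immediate, since $f(0)=0$ and $\Vert z\Vert_X^k=0$, so I fix $z\in\mathbb{B}_X\setminus\{0\}$ and set $u=z/\Vert z\Vert_X$, so that $\Vert u\Vert_X=1$. I would then define $\phi:\mathbb{D}\to Y$ by $\phi(\zeta)=f(\zeta u)$; since $\Vert \zeta u\Vert_X=|\zeta|<1$ whenever $\zeta\in\mathbb{D}$, the point $\zeta u$ lies in $\mathbb{B}_X$, and $\phi$ is a holomorphic $Y$-valued map on $\mathbb{D}$ satisfying $\Vert\phi(\zeta)\Vert_Y<M$.

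Next I would record the order of vanishing of $\phi$ at the origin. Writing the homogeneous expansion $f(w)=\sum_{n=0}^\infty P_n(w)$ with $P_n(w)=(1/n!)D^nf(0)(w^n)$ as in $(\ref{e2})$, the hypotheses $f(0)=Df(0)=\cdots=D^{k-1}f(0)=0$ force $P_0=P_1=\cdots=P_{k-1}=0$. Using the $n$-homogeneity $P_n(\zeta u)=\zeta^n P_n(u)$, I obtain $\phi(\zeta)=\sum_{n=k}^\infty \zeta^n P_n(u)$, so $\phi$ has a zero of order at least $k$ at $\zeta=0$.

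The heart of the argument is to pass from the vector-valued $\phi$ to scalar functions. For every continuous linear functional $\ell\in L(Y,\mathbb{C})$ with $\Vert\ell\Vert\le 1$, I set $h_\ell=\ell\circ\phi:\mathbb{D}\to\mathbb{C}$. Then $h_\ell$ is holomorphic, $|h_\ell(\zeta)|\le\Vert\phi(\zeta)\Vert_Y<M$, and $h_\ell$ inherits the zero of order $k$, so that $h_\ell(\zeta)/\zeta^k$ extends holomorphically across the origin. Applying the maximum modulus principle to $h_\ell(\zeta)/\zeta^k$ on each disc $|\zeta|\le r$ and letting $r\to 1^-$ yields the scalar Schwarz estimate $|h_\ell(\zeta)|\le M|\zeta|^k$ throughout $\mathbb{D}$.

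Finally I would recover the norm bound by Hahn--Banach duality. Since $\Vert\phi(\zeta)\Vert_Y=\sup\{|\ell(\phi(\zeta))|:\ell\in L(Y,\mathbb{C}),\ \Vert\ell\Vert\le 1\}$, taking the supremum over all such $\ell$ gives $\Vert\phi(\zeta)\Vert_Y\le M|\zeta|^k$ for every $\zeta\in\mathbb{D}$. Evaluating at $\zeta=\Vert z\Vert_X\in(0,1)$ and noting that $\phi(\Vert z\Vert_X)=f(\Vert z\Vert_X\, u)=f(z)$, I conclude $\Vert f(z)\Vert_Y\le M\Vert z\Vert_X^k$, as claimed. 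The only genuine obstacle is that $\zeta\mapsto\Vert\phi(\zeta)\Vert_Y$ is not holomorphic (merely subharmonic), so the one-variable Schwarz lemma cannot be invoked for it directly; the Hahn--Banach reduction to the scalar slices $h_\ell$ is precisely the device that circumvents this, and it is the step that must be set up with care.
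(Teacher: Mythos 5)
Your proof is correct: the slicing $\phi(\zeta)=f(\zeta u)$ along complex lines, the reduction to scalar functions $h_\ell=\ell\circ\phi$ via Hahn--Banach, and the classical Schwarz-lemma estimate for a function with a zero of order $k$ at the origin together give exactly the claimed bound, and you rightly flag that the non-holomorphy of $\zeta\mapsto\Vert\phi(\zeta)\Vert_Y$ is the reason the dualization step is needed. The paper itself offers no proof of this lemma --- it is quoted verbatim from Graham and Kohr \cite[Lemma 6.1.28]{GK2003} --- and your argument is essentially the standard one found there, so there is nothing to reconcile.
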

\section{Main results}
\noindent In the following result, we establish an improved version of Bohr inequality for holomorphic mappings $F:\mathbb{B}_X\to\ol{\mathbb{D}^m}$, where $\mathbb{B}_X$ is the unit ball of a complex Banach space $X$.
\begin{theo}\label{Th1}
Let $a=(a_1, a_2, \ldots, a_m)\in\ol{\mathbb{D}^m}$ and let $F: \mathbb{B}_X\to \ol{\mathbb{D}^m}$ be a holomorphic mapping with the following series expansion: 
\beas F(z)=a+\sum_{n=1}^\infty P_n(z)\quad\text{for}\quad z\in \mathbb{B}_X,\eeas
where $P_n(z)=(1/n!) D^n F(0)(z^n)$, $n\geq 1$. Assume that $|a_i|=\Vert a\Vert_\infty$ for $1\leq i\leq m$. Then, we have 
\bea\label{a1}\Vert F(z)\Vert_\infty^2+\sum_{n=1}^\infty \Vert P_n(z)\Vert_\infty^2\leq 1\eea
for $\Vert z\Vert_X=r\leq \sqrt{11/27}$. The number $ \sqrt{11/27}$ cannot be improved.
 \end{theo}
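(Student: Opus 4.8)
The plan is to reduce the $m$-dimensional problem to the one–variable estimates of Lemmas \ref{lem2} and \ref{lem3} by slicing. Fix $z\in\mathbb{B}_X$ with $0<\Vert z\Vert_X=r\le\sqrt{11/27}$, put $u=z/r$, and for each coordinate $1\le j\le m$ consider the scalar slice $g_j(\zeta)=F_j(\zeta u)$, $\zeta\in\mathbb{D}$, where $F_j$ is the $j$-th component of $F$. Each $g_j$ is holomorphic with $|g_j|\le 1$ and $g_j(0)=a_j$, so by hypothesis $|g_j(0)|=\Vert a\Vert_\infty=:A$ for every $j$. Writing $g_j(\zeta)=\sum_{n\ge0}c_{n,j}\zeta^n$, the homogeneity of $P_n$ gives $c_{n,j}=P_{n,j}(u)$, whence $\Vert P_n(z)\Vert_\infty=r^n\max_j|c_{n,j}|$ and $\Vert F(z)\Vert_\infty=\max_j|g_j(r)|$.

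First I would bound the leading term. Applying Lemma \ref{lem3} with $Y=(\mathbb{C}^m,\Vert\cdot\Vert_\infty)$ and $\Vert F(0)\Vert_\infty=A$ yields $\Vert F(z)\Vert_\infty\le (A+r)/(1+Ar)$, so $\Vert F(z)\Vert_\infty^2\le\left((A+r)/(1+Ar)\right)^2$. Next I would estimate the tail $\sum_{n\ge1}\Vert P_n(z)\Vert_\infty^2$. For a single coordinate, Lemma \ref{lem2} (with $p=2$ and $|a_0|=A$) gives $\sum_{n\ge1}|c_{n,j}|^2r^{2n}\le r^2(1-A^2)^2/(1-A^2r^2)$, the bound being the same for every $j$ because all constant terms share the modulus $A$. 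Combining these, the target is expected to reduce to the one–variable inequality
\beas \Psi(A,r):=\left(\frac{A+r}{1+Ar}\right)^2+\frac{r^2(1-A^2)^2}{1-A^2r^2}\le 1\quad\text{for all } A\in[0,1].\eeas

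The main obstacle is precisely the passage from the per–coordinate estimate to a bound for $\sum_{n\ge1}r^{2n}\max_j|c_{n,j}|^2$: since the maximising coordinate may depend on $n$, the coordinatewise maxima must be reconciled with the single–function estimate of Lemma \ref{lem2}, and this is the delicate point on which the whole inequality hinges (in the extremal configuration the mass concentrates on one coordinate, so the scalar bound is attained). Granting this reduction, it remains to verify $\Psi(A,r)\le 1$ for $r\le\sqrt{11/27}$. I would treat $r$ as a parameter and analyse $A\mapsto\Psi(A,r)$ on $[0,1]$: one checks $\Psi(1,r)=1$ and $\Psi(0,r)=2r^2$, computes $\partial_A\Psi$, and shows that on $(0,1)$ it has a single interior critical point, necessarily a maximum. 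Clearing the denominators $1+Ar$ and $1-A^2r^2$ turns $\Psi(A,r)\le1$ into the nonpositivity of a polynomial in $A$ with coefficients depending on $r$; the threshold at which its interior maximum first reaches the value $1$ is exactly $r^2=11/27$, giving the claimed radius.

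Finally I would establish sharpness. Let $A^\ast\in(0,1)$ be the critical value realising the maximum of $\Psi(\cdot,\sqrt{11/27})$ (numerically $A^\ast\approx 0.51$), and choose a unit vector $u_0$ together with a norming functional $l\in T(u_0)$ satisfying $l(u_0)=1$. The mapping $F(z)=\left(\phi(l(z)),\ldots,\phi(l(z))\right)$ built from the single Blaschke factor $\phi(\zeta)=(A^\ast+\zeta)/(1+A^\ast\zeta)$ satisfies $F(0)=(A^\ast,\ldots,A^\ast)$, so $|a_i|=\Vert a\Vert_\infty$ for all $i$, and is extremal for both Lemma \ref{lem3} and Lemma \ref{lem2}. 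Evaluating at $z=ru_0$ makes the left–hand side of \eqref{a1} equal $\Psi(A^\ast,r)$, which equals $1$ at $r=\sqrt{11/27}$ and exceeds $1$ for $r$ slightly larger. Hence the radius $\sqrt{11/27}$ cannot be improved.
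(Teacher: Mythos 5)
Your overall strategy coincides with the paper's own proof: slice along $z_0=z/r$, apply Lemma \ref{lem2} coordinatewise and Lemma \ref{lem3} to the leading term, reduce everything to the one-variable function $\Psi(A,r)$, and exhibit sharpness with a Blaschke factor (the paper takes the critical value $\lambda=\sqrt{3/11}$ and verifies $\Psi(A,\sqrt{11/27})\le 1$ by factoring an explicit sextic in $A$ rather than by your critical-point analysis; both routes are routine once the reduction is accepted).

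However, the step you explicitly set aside --- ``granting this reduction'' --- is a genuine gap, and it cannot in fact be granted: the inequality $\sum_{n\ge1}r^{2n}\max_j|c_{n,j}|^2\le r^2(1-A^2)^2/(1-A^2r^2)$ does not follow from the coordinatewise bounds, because the maximizing index varies with $n$, and the termwise maximum of the summands can strictly exceed every one of the individual sums. Concretely, take $A=0$ and $g_j(\zeta)=\zeta^j$ for $j=1,\dots,m$, i.e. $F(z)=\bigl(l_{z_0}(z),\dots,(l_{z_0}(z))^m\bigr)$: then $\sum_{n}r^{2n}\max_j|c_{n,j}|^2=\sum_{n=1}^m r^{2n}>r^2$ already for $m=2$, so the claimed tail bound fails. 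Worse, for $m\ge 3$ the left-hand side of (\ref{a1}) for this $F$ equals $2r^2+r^4+\cdots+r^{2m}$, which at $r=\sqrt{11/27}$ is approximately $1.048>1$; so the reduction is not merely unproved but false, and no argument along these lines can close the gap without an extra hypothesis. You identified exactly the right weak point, and it is worth noting that the paper's own proof slips past it in inequality (\ref{a2}) by fixing a single index $k$ with $|(P_n)_k(z_0)|=\Vert P_n(z_0)\Vert_\infty$ as though $k$ were independent of $n$ --- the same unjustified step. A correct treatment must either assume the maximum is attained at one coordinate for every $n$, or pay a factor depending on $m$ when summing the coordinatewise estimates; neither yields the stated radius.
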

\begin{proof} Let $z\in\mathbb{B}_X\setminus\{0\}$ be fixed and denote $z_0=z/\Vert z\Vert_X$. Then, $z_0\in\pa \mathbb{B}_X$ and $z=r z_0$, where $\Vert z\Vert_X=r$. 
As $F(z)=(F_1(z),F_2(z),\ldots,F_m(z))$, we consider the function $g_i: \mathbb{D}\to\ol{\mathbb{D}}$ defined by 
\beas g_i(t):=F_i(t z_0)=a_i+\sum_{n=1}^\infty (P_{n})_i(z_0)t^n,\quad t\in\mathbb{D}\quad\text{and}\quad i\in\{1,2,\ldots,m\}.\eeas
The condition $\Vert a\Vert_\infty=|a_j|$ for each $j\in\{1,2,\ldots, m\}$ ensures that the maximum modulus of the initial value is attained in every component, which is essential for obtaining the uniform estimates by maximizing over the component index $j$.
In view of \textrm{Lemma \ref{lem2}}, we have
\bea\label{a3} \sum_{n=1}^\infty |(P_{n})_i(z_0)|^2 |t|^{2n}\leq |t|^2\frac{\left(1-|a_i|^2\right)^2}{1-|a_i|^2 |t|^2}=|t|^2\frac{\left(1-\Vert a\Vert_\infty^2\right)^2}{1-\Vert a\Vert_\infty^2 |t|^2},\quad |t|<1\eea
for each fixed $i\in\{1,2,\ldots,m\}$.
Let $k\in\{1,2,\ldots,m\}$ be such that $|(P_{n})_k(z_0)|=\Vert P_n(z_0)\Vert_\infty=\max_{1\leq i\leq m} \left|(P_n)_i(z_0)\right|$. By setting $|t|=r<1$ in (\ref{a3}), we have 
\bea\label{a2}\sum_{n=1}^\infty \Vert P_n(z_0)\Vert_\infty^2 r^{2n}=\sum_{n=1}^\infty \left(\max_{1\leq i\leq m}\left\{|(P_{n})_i(z_0)|\right\}\right)^2 r^{2n}&=&\sum_{n=1}^\infty|(P_{n})_k(z_0)|^2 r^{2n}\nonumber\\&\leq& r^2\frac{\left(1-\Vert a\Vert_\infty^2\right)^2}{1-\Vert a\Vert_\infty^2 r^2}.\eea
Let $x=\Vert a\Vert_{\infty}\in[0,1]$. Evidently, if $x=1$, then we have $P_n(z_0)=0$ for $n\geq 1$. Thus, the inequality (\ref{a1}) holds for $z=rz_0$ and $r\in[0,1)$. 
Let $x\in[0,1)$. Using (\ref{a2}), for $r\in(0,1)$, we have
\beas \Vert F(r z_0)\Vert_\infty^2+\sum_{n=1}^\infty \Vert P_n(r z_0)\Vert_\infty^2&\leq&\left(\frac{x+r}{1+x r}\right)^2+r^2\frac{\left(1-x^2\right)^2}{1-x^2 r^2}\\[2mm]
&=&1+G_1(x, r),\eeas
where
\beas G_1(x,r)=\left(\frac{x+r}{1+x r}\right)^2+r^2\frac{\left(1-x^2\right)^2}{1-x^2 r^2}-1,  \quad x\in[0,1)\quad\text{and}\quad r\in[0,1).\eeas
To establish our result, it suffices to demonstrate that $G_1(x,r)\leq 0$.
Differentiating partially $G_1(x,r)$ with respect to $r$, we obtain
\beas \frac{\pa}{\pa r}G_1(x,r)&=&2\left(\frac{x+r}{1+x r}\right)\left(\frac{(1+x r)-(x+r) x}{(1+x r)^2}\right)\\[2mm]
&&+\frac{2 r \left(1-x^2\right)^2}{1-r^2 x^2}+\frac{2 r^3 x^2 \left(1-x^2\right)^2}{\left(1-r^2 x^2\right)^2}\\[2mm]
&=&\frac{2 \left(1-x^2\right) \left(x+r\right)}{\left(1+x r\right)^3}+\frac{2 r \left(1-x^2\right)^2}{1-r^2 x^2}+\frac{2 r^3 x^2 \left(1-x^2\right)^2}{\left(1-r^2 x^2\right)^2}\geq 0,\eeas
which shows that $G_1(x,r)$ is a monotonically decreasing function of $r\in[0, 1)$ and it follows that for $r\leq \sqrt{11/27}$, we have
\beas G_1(x,r)&\leq& G_1\left(x,\sqrt{11/27}\right)\\[2mm]
&=&\frac{3 \left(121 x^6+66 \sqrt{33} x^5-121 x^4-132 \sqrt{33} x^3+135 x^2+66 \sqrt{33} x-135\right)}{\left(\sqrt{33} x+9\right)^2 \left(27-11 x^2\right)}.\eeas
Let $G_2(x):=121 x^6+66 \sqrt{33} x^5-121 x^4-132 \sqrt{33} x^3+135 x^2+66 \sqrt{33} x-135$. Following a basic computation, we obtain
\beas G_2(x)=121(x^2-1) \left(x+5 \sqrt{\frac{3}{11}}\right) \left(x+3 \sqrt{\frac{3}{11}}\right) \left(x-\sqrt{\frac{3}{11}}\right)^2\leq 0,\quad x\in[0, 1)\eeas
and $G_2(\sqrt{3/11})=0$. Therefore, $G_1(x, r)\leq 0$ for $r\leq \sqrt{11/27}$.\\[2mm]
\indent To prove the sharpness of the constant $\sqrt{11/27}$, we consider the function $F_1: \mathbb{B}_X\to \ol{\mathbb{D}^m}$ defined by
\beas F_1(z)=g(l_{z_0}(z))u\quad\quad\text{for}\quad z\in \mathbb{B}_X,\eeas
where $z_0\in \pa \mathbb{B}_X$, $l_{z_0}\in T(z_0)$, $u=(u_1,u_2,\ldots,u_m)\in \pa \mathbb{D}^m$ with $|u_1|=|u_2|=\dots =|u_m|=1$, and 
\beas g(t)= \frac{\lambda +t}{1+\lambda t},\quad  t \in \mathbb{D}\quad \text{and}\quad \lambda=\sqrt{3/11}.\eeas
Therefore, for $r\in(0, 1)$, we have 
\beas F_1(rz_0)=g(l_{z_0}(rz_0))u =g\left(r\Vert z_0\Vert\right)u=\frac{\lambda +r}{1+\lambda r}u= a+\sum_{n=1}^\infty P_n(rz_0),\eeas
where $a=\lambda u$ and $P_n(z_0)=(-1)^n \lambda^{n-1}(\lambda^2 -1)u$ for $n\geq 1$. 
Thus, we have
\beas \Vert F_1(r z_0)\Vert_\infty^2+\sum_{n=1}^\infty \Vert P_n(r z_0)\Vert_\infty^2&=&\left(\frac{\lambda +r}{1+\lambda r}\right)^2+(1-\lambda^2)^2\sum_{n=1}^\infty \lambda^{2(n-1)}r^{2n}\nonumber\\
&=&\left(\frac{\lambda +r}{1+\lambda r}\right)^2+(1-\lambda^2)^2\frac{r^2}{1-\lambda^2 r^2}\\
&=&\frac{64 r^2}{121-33 r^2}+\frac{\left(11 r+\sqrt{33}\right)^2}{\left(\sqrt{33} r+11\right)^2}>1\quad\text{for}\quad r>\sqrt{\frac{11}{27}},\eeas
which shows that the constant $\sqrt{11/27}$ cannot be improved. This completes the proof.
\end{proof}
\noindent 
The condition $\Vert a\Vert_\infty=|a_j|$ for each $j\in\{1,2,\ldots, m\}$ is vital in \textrm{Theorem \ref{Th1}} and cannot be omitted. To illustrate the importance of this condition, we present the following example.
\begin{exam}\label{Exam1} Consider the holomorphic mapping $F:\mathbb{B}_X\to\ol{\mathbb{D}^2}$ with
\[F(z)=\left(\frac{a_1+l_{z_0}(z)}{1+a_1 l_{z_0}(z)}, \frac{a_2+l_{z_0}(z)}{1+a_2 l_{z_0}(z)}\right),~z\in\mathbb{B}_X, \] 
where $a_1\in (0,1)$, $a_2\in (0,1)$, $z_0\in\pa \mathbb{B}_X$ and $l_{z_0}\in T(z_0)$. For $r\in(0, 1)$, we have
\beas F(r z_0)=\left(\frac{a_1+l_{z_0}(r z_0)}{1+a_1 l_{z_0}(r z_0)}, \frac{a_2+l_{z_0}(r z_0)}{1+a_2 l_{z_0}(r z_0)}\right)=\left(\frac{a_1+r}{1+a_1 r}, \frac{a_2+r}{1+a_2 r}\right)=a+\sum_{n=1}^\infty P_n(r z_0),\eeas
where $a=(a_1, a_2)$ and $P_n(r z_0)=\left((-1)^n (a_1^2-1)a_1^{n-1}r^n, (-1)^n(a_2^2-1)a_2^{n-1}r^n\right)$ for $n\geq 1$. 
Note that
\beas \Vert P_n(r z_0)\Vert_\infty=\max\left\{\left|(-1)^n (a_1^2-1)a_1^{n-1}r^n\right|, \left|(-1)^n(a_2^2-1)a_2^{n-1}r^n\right|\right\}\quad\text{for}\quad n\geq 1.\eeas
Let $H_k(t):=(1-t^2)t^{k-1}$, where $t\in(0, 1)$ and $k\;(\geq 1)$. Evidently,
\[H_k'(t)=(k-1) t^{k-2}-(k+1)t^k=(k-1) t^{k-2}\left(1-\frac{k+1}{k-1}t^2\right)\leq 0\quad\text{for}\quad t^2\geq \frac{k-1}{k+1}.\]
Thus, $H_k(t)$ is a monotonically decreasing function of $t$ when $t\geq \sqrt{(k-1)/(k+1)}$.
Suppose that $\sqrt{1/3}<a_1<a_2<1$. 
Hence, we have $\Vert P_n(r z_0)\Vert_\infty=(1-a_1^2)a_1^{n-1}r^n $ for $n=1,2$ and $\Vert P_n(r z_0)\Vert_\infty\geq (1-a_2^2)a_2^{n-1}r^n$ for $n\geq 3$.
Therefore, we have
\beas&& \Vert F(r z_0)\Vert_\infty^2+\sum_{n=1}^\infty \Vert P_n(r z_0)\Vert_\infty^2\\
&=&\left(\frac{a_2 +r}{1+a_2 r}\right)^2+(1-a_1^2)^2r^2(1+a_1^2r^2)+\sum_{n=3}^\infty \Vert P_n(r z_0)\Vert_\infty^2\\
&\geq &\left(\frac{a_2 +r}{1+a_2 r}\right)^2+(1-a_1^2)^2r^2(1+a_1^2r^2)+(1-a_2^2)^2\sum_{n=3}^\infty a_2^{2(n-1)}r^{2n}\\
&=&\left(\frac{a_2 +r}{1+a_2 r}\right)^2+(1-a_1^2)^2r^2(1+a_1^2r^2)+(1-a_2^2)^2\frac{a_2^4 r^6}{1-a_2^2 r^2}\eeas
Thus, for each $r\in(0, 1)$, we have  
\beas \Vert F(r z_0)\Vert_\infty^2+\sum_{n=1}^\infty \Vert P_n(r z_0)\Vert_\infty^2\geq 1+(1-a_1^2)^2r^2(1+a_1^2r^2)>1\quad\text{as}\quad a_2\to 1^-.\eeas
\end{exam}
In the following theorem, we establish a refined version of the Bohr inequality for holomorphic mapping $F:\mathbb{B}_X\to\ol{\mathbb{D}^m}$, where $\mathbb{B}_X$ is the unit ball of a complex Banach space $X$.
\begin{theo}\label{Th2}
Let $a=(a_1, a_2, \ldots, a_m)\in\ol{\mathbb{D}^m}$, $p\in\{1,2\}$ and let $F: \mathbb{B}_X\to \ol{\mathbb{D}^m}$ be a holomorphic mapping with the following expansion:
\beas F(z)=a+\sum_{n=1}^\infty P_n(z),\quad z\in \mathbb{B}_X,\eeas
where $P_n(z)=(1/n!) D^n F(0)(z^n)$. Assume that $|a_i|=\Vert a\Vert_\infty$ for all $i\in\{1, 2, \ldots,m\}$. Then, we have 
\be\label{c1}
\Vert F(z)-F(0)\Vert_\infty+\Vert F(0)\Vert_\infty^p+\sum_{n=1}^\infty \Vert P_n(z)\Vert_\infty+\left(\frac{1}{1+\Vert a\Vert_\infty}+\frac{\Vert z\Vert_X}{1-\Vert z\Vert_X}\right)\sum_{n=1}^\infty \Vert P_n(z)\Vert_\infty^2\leq 1\ee
for $\Vert z\Vert_X=r\leq r_p$ and the number $r_p$ cannot be improved, where $r_1=1/5$ and $r_2=1/3$.
 \end{theo}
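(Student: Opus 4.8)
The plan is to follow the slice-reduction strategy used for Theorem \ref{Th1}. Fix $z\in\mathbb{B}_X\setminus\{0\}$, put $z_0=z/\|z\|_X\in\partial\mathbb{B}_X$ and $r=\|z\|_X$, and for each coordinate $i$ consider the analytic self-map $g_i(t)=F_i(tz_0)=a_i+\sum_{n\ge1}(P_n)_i(z_0)t^n$ of $\mathbb{D}$. The hypothesis $|a_i|=\|a\|_\infty=:x$ for every $i$ guarantees that all slice functions share the same initial modulus $x$, which is what permits a single one-variable estimate governed by the common parameter $x$. Using $n$-homogeneity, $P_n(rz_0)=r^nP_n(z_0)$, the left-hand side of (\ref{c1}) equals $\|F(rz_0)-a\|_\infty+x^p+\sum_n\|P_n(z_0)\|_\infty r^n+W\sum_n\|P_n(z_0)\|_\infty^2 r^{2n}$, where $W=\frac{1}{1+x}+\frac{r}{1-r}$.

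Two one-variable inputs drive the proof. First, for the displacement term I would invoke the Schwarz--Pick estimate (the integrated form of the first inequality in Lemma \ref{lem1}): each $g_i$ sends $\{|t|\le r\}$ into the pseudohyperbolic disc of radius $r$ about $a_i$, so $|g_i(r)-a_i|\le \frac{r(1-|a_i|^2)}{1-|a_i|r}=\frac{r(1-x^2)}{1-xr}$, and maximizing over $i$ gives $\|F(rz_0)-a\|_\infty\le\frac{r(1-x^2)}{1-xr}$. Second, for the combined linear and weighted-quadratic sums I would apply Lemma \ref{2lem2} to the coordinate realizing the maximum (together with $\|P_n(z_0)\|_\infty\le 1-x^2$ from Lemma \ref{lem1}) to obtain $\sum_n\|P_n(z_0)\|_\infty r^n+W\sum_n\|P_n(z_0)\|_\infty^2 r^{2n}\le (1-x^2)\frac{r}{1-r}$. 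The delicate point---and the step I expect to be the main obstacle---is exactly this passage from the coordinatewise-maximum norms $\|P_n(z_0)\|_\infty=\max_i|(P_n)_i(z_0)|$ to a single one-variable estimate: both the linear sum and the squared sum must be controlled simultaneously through the dominant slice, mirroring the maximizing-index argument in the proof of Theorem \ref{Th1}.

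Combining the two inputs reduces everything to one two-variable inequality. The left-hand side of (\ref{c1}) is then bounded by $x^p+\frac{r(1-x^2)}{1-xr}+(1-x^2)\frac{r}{1-r}=1+G_p(x,r)$, where $G_p(x,r)=x^p-1+\frac{r(1-x^2)}{1-xr}+(1-x^2)\frac{r}{1-r}$, so it remains to show $G_p(x,r)\le 0$ for $r\le r_p$ and $x\in[0,1]$. I would first note that $\partial_r G_p\ge 0$, since $\frac{d}{dr}\frac{r}{1-xr}=\frac{1}{(1-xr)^2}$ and $\frac{d}{dr}\frac{r}{1-r}=\frac{1}{(1-r)^2}$ are positive, so $G_p$ is increasing in $r$ and it suffices to treat $r=r_p$. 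The remaining computation is routine factoring: for $p=2$, $r_2=1/3$, one finds $G_2(x,1/3)=-\frac{(1-x)^2(1+x)}{2(3-x)}\le 0$; for $p=1$, $r_1=1/5$, one writes $G_1(x,1/5)=(1-x)\big(\frac{1+x}{5-x}+\frac{1+x}{4}-1\big)$, whose bracket is increasing in $x$ and vanishes at $x=1$, hence is nonpositive on $[0,1]$. Equality in either case forces $x\to 1$, which is what fixes the constants $r_p$.

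For sharpness I would take all coordinates equal: $F(z)=g(l_{z_0}(z))\,u$ with $l_{z_0}\in T(z_0)$, $u\in\partial\mathbb{D}^m$ having every coordinate of modulus $1$, and $g(t)=\frac{\lambda-t}{1-\lambda t}$, $\lambda=x$. Then every slice coincides with $g$, all the estimates above are attained with equality, and a direct expansion gives $\|F(rz_0)-a\|_\infty=\sum_n\|P_n(rz_0)\|_\infty=\frac{r(1-\lambda^2)}{1-\lambda r}$ and $\sum_n\|P_n(rz_0)\|_\infty^2=\frac{(1-\lambda^2)^2r^2}{1-\lambda^2r^2}$; letting $\lambda\to1^-$ one checks that the left-hand side of (\ref{c1}) exceeds $1$ precisely when $r>r_p$, matching the vanishing of the critical factor of $G_p$ at $x=1$. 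This shows $r_1=1/5$ and $r_2=1/3$ are best possible.
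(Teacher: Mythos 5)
Your proposal is correct and follows essentially the same route as the paper: slice functions $g_i(t)=F_i(tz_0)$, Lemmas \ref{lem1} and \ref{2lem2} applied through the maximizing coordinate, reduction to a two-variable inequality in $(x,r)=(\Vert a\Vert_\infty,\Vert z\Vert_X)$, and sharpness via $g(t)=(\lambda-t)/(1-\lambda t)$ with $\lambda\to1^-$. The only deviation is your Schwarz--Pick bound $\Vert F(rz_0)-a\Vert_\infty\leq r(1-x^2)/(1-xr)$ for the displacement term, where the paper uses the cruder termwise estimate $(1-x^2)r/(1-r)$ and hence the simpler reduction $x^p+2(1-x^2)r/(1-r)\leq 1$; since the critical case is $x\to1^-$, where the two bounds coincide, both give $r_1=1/5$ and $r_2=1/3$, and your factorizations of $G_p(x,r_p)$ check out. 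The ``delicate point'' you flag---passing from $\Vert P_n(z_0)\Vert_\infty=\max_i|(P_n)_i(z_0)|$ to a single one-variable estimate---is treated in the paper exactly as you propose, by fixing the index realizing the maximum, so your argument is no weaker than the published one on that step.
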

\begin{proof}
Let $z_0\in\pa \mathbb{B}_X$ be fixed. As $F(z)=(F_1(z),F_2(z),\ldots,F_m(z))$, we consider the function $g_i: \mathbb{D}\to\ol{\mathbb{D}}$ defined by 
\beas g_i(t):=F_i(t z_0)=a_i+\sum_{n=1}^\infty (P_{n})_i(z_0)t^n,\quad t\in\mathbb{D}\quad \text{and}\quad i\in\{1,2,\ldots,m\}.\eeas
In view of \textrm{Lemmas \ref{lem1}} and \ref{2lem2}, for each fixed $i\in\{1,2,\ldots,m\}$, we have
\bea\label{c2} &&|(P_{n})_i(z_0)|\leq 1-|a_i|^2=1-\Vert a\Vert^2_{\infty}\quad \text{for}\quad n\geq 1\quad\text{and}\\[1mm]
\label{c3} &&\sum_{n=1}^\infty \left|(P_{n})_i(z_0)\right| |t|^{n}+\left(\frac{1}{1+|a_i|}+\frac{|t|}{1-|t|}\right) \sum_{n=1}^\infty \left|(P_{n})_i(z_0)\right|^2 |t|^{2n}\nonumber\hspace{2cm}\\
&\leq& (1-|a_i|^2)\frac{|t|}{1-|t|}=(1-\Vert a\Vert_\infty^2)\frac{|t|}{1-|t|}.\eea
Thus, we have
\bea\label{c4}  \Vert P_n(z_0)\Vert_{\infty}=\max\limits_{1\leq i\leq m} |(P_{n})_i(z_0)|\leq 1-\Vert a\Vert^2_{\infty}\quad \text{for}\quad n\geq 1.\eea
Let $k\in\{1,2,\ldots,m\}$ be such that $|(P_{n})_k(z_0)|=\Vert P_n(z_0)\Vert_\infty=\max_{1\leq i\leq m} \left|(P_n)_i(z_0)\right|$. By setting $|t|=r<1$ in (\ref{c3}), we have 
\bea\label{c5}&&\sum_{n=1}^\infty \left\Vert(P_{n})(z_0)\right\Vert_\infty r^{n}+\left(\frac{1}{1+\Vert a\Vert_\infty}+\frac{r}{1-r}\right) \sum_{n=1}^\infty \left\Vert(P_{n})(z_0)\right\Vert_\infty^2 r^{2n}\nonumber\\[2mm]
&=&\sum_{n=1}^\infty \left(\max_{1\le i\leq m}\left|(P_{n})_i(z_0)\right|\right) r^{n}+\left(\frac{1}{1+\Vert a\Vert_\infty}+\frac{r}{1-r}\right) \sum_{n=1}^\infty \left(\max_{1\leq i\leq m}\left|(P_{n})_i(z_0)\right|\right)^2 r^{2n}\nonumber\eea
\bea &=&\sum_{n=1}^\infty \left|(P_{n})_k(z_0)\right| r^{n}+\left(\frac{1}{1+\Vert a\Vert_\infty}+\frac{r}{1-r}\right) \sum_{n=1}^\infty \left|(P_{n})_k(z_0)\right|^2 r^{2n}\nonumber\hspace{3cm}\\[2mm]
&\leq&(1-\Vert a\Vert_\infty^2)\frac{r}{1-r}.\eea
It is evident that 
\be\label{c6}\Vert F(z)-F(0)\Vert_\infty=\left\Vert \sum_{n=1}^\infty P_n(z)\right\Vert_\infty \leq \sum_{n=1}^\infty \left\Vert P_n(z)\right\Vert_\infty=\sum_{n=1}^\infty\left( \max_{1\leq i\leq m}\left|(P_n)_i(z)\right|\right).\ee
In view of (\ref{c2}) and (\ref{c6}), for $r\in[0,1)$, we have
\bea\label{c7}\Vert F(r z_0)-F(0)\Vert_\infty\leq \sum_{n=1}^\infty \left(\max_{1\leq i\leq m}\left|(P_n)_i(r z_0)\right|\right)&=&\sum_{n=1}^\infty \left|(P_n)_k(z_0)\right| r^n\nonumber\\
&\leq &\left( 1-\Vert a\Vert^2_\infty\right)\sum_{n=1}^\infty r^n\nonumber\\[2mm]
&=&(1-\Vert a\Vert_\infty^2)\frac{r}{1-r}.\eea
Let $x=\Vert a\Vert_{\infty}\in[0,1]$. If $x=1$, then $P_n(z_0)=0$ for $n\geq 1$. Therefore, the inequality (\ref{c1}) holds for $z=rz_0$, where $r\in[0,1)$. Let $x\in[0,1)$. Using (\ref{c4}), (\ref{c5}) and (\ref{c7}), for $r\in(0,1)$, we have
\beas 
&&\Vert F(0)\Vert_\infty^p+\sum_{n=1}^\infty \Vert P_n(r z_0)\Vert_\infty+\left(\frac{1}{1+\Vert a\Vert_\infty}+\frac{\Vert r z_0\Vert_X}{1-\Vert r z_0\Vert_X}\right)\sum_{n=1}^\infty \Vert P_n(r z_0)\Vert_\infty^2\\[2mm]
&&+\Vert F(r z_0)-F(0)\Vert_\infty\\[2mm]
&=&x^p+\sum_{n=1}^\infty \Vert P_n(z_0)\Vert_\infty r^n+\left(\frac{1}{1+x}+\frac{r}{1-r}\right)\sum_{n=1}^\infty \Vert P_n( z_0)\Vert_\infty^2 r^{2n}\\[2mm]
&&+\Vert F(r z_0)-F(0)\Vert_\infty\\[2mm]
&\leq&1-(1-x^p)+2(1-x^2)\frac{r}{1-r}=
\left\{\begin{array}{lll}
1+(1-x)G_3(x,r)&\text{for}~p=1,\\[2mm]
1+(1-x^2)G_4(r)&\text{for}~p=2,\end{array}\right.\eeas
where
\beas G_3(x,r)=2(1+x)\frac{r}{1-r}-1\quad\text{and}\quad G_4(r)=\frac{3r-1}{1-r},\quad x\in[0,1)~\text{and}~r\in(0,1). \eeas
It is evident that $G_3(x,r)$ is a monotonically increasing function of $x\in[0,1)$ and hence, we have  
\beas G_3(x,r)\leq \lim_{x\to1^-}G_3(x,r)=\frac{4r}{1-r}-1\leq 0\quad \text{for}\quad r\leq r_1=\frac{1}{5}.\eeas
Furthermore, it is clear that $G_4(r)\leq 0$ for $r\leq r_2=1/3$.\\[2mm]
\indent To prove the sharpness of the constant $r_p$ for $p\in\{1,2\}$, we consider the function $F_2:\mathbb{B}_X\to \ol{\mathbb{D}^m}$ defined by
\beas F_2(z)=g(l_{z_0}(z))u\quad\text{for}\quad z\in \Bbb{B}_X,\eeas
where $z_0\in \pa \Bbb{B}_X$, $l_{z_0}\in T(z_0)$, $u=(u_1,u_2,\dots,u_m)\in \pa \mathbb{D}^m$ with $|u_1|=|u_2|=\dots =|u_m|=1$, and 
\beas g(t)= \frac{\lambda -t}{1-\lambda t}, \quad t\in \mathbb{D}\quad\text{and}\quad\lambda \in (0,1).\eeas
Therefore, for $r\in (0, 1)$, we have 
\beas F_2(rz_0)=g(l_{z_0}(rz_0))u =g\left(r\Vert z_0\Vert_X\right)u=\frac{\lambda -r}{1-\lambda r}u= a+\sum_{n=1}^\infty P_n(rz_0),\eeas
where $a=\lambda u$ and $P_n(z_0)=\lambda^{n-1}(\lambda^2 -1)u$ for $n\geq 1$. Thus, we have
\beas &&\Vert F_2(0)\Vert_\infty^p+\sum_{n=1}^\infty \Vert P_n(r z_0)\Vert_\infty+\left(\frac{1}{1+\Vert a\Vert_\infty}+\frac{\Vert r z_0\Vert_X}{1-\Vert r z_0\Vert_X}\right)\sum_{n=1}^\infty \Vert P_n(r z_0)\Vert_\infty^2\\[2mm]
&&+\Vert F(r z_0)-F(0)\Vert_\infty\\[2mm]
&=&\Vert \lambda u\Vert_\infty^p+(1-\lambda^2 )\sum_{n=1}^\infty \lambda^{n-1} r^n+\left(\frac{1}{1+\Vert \lambda u\Vert_\infty}+\frac{r}{1-r}\right)(1-\lambda^2)^2\sum_{n=1}^\infty \lambda^{2(n-1)}r^{2n}\\[2mm]
&&+\left\Vert\left( \frac{\lambda -r}{1-\lambda r}-\lambda\right)u\right\Vert_\infty\eeas
\beas&=&\frac{(1-\lambda^2)r}{1-\lambda r}+\lambda^p+(1-\lambda^2)\frac{r}{1-\lambda r}+\left(\frac{1}{1+\lambda}+\frac{r}{1-r}\right)(1-\lambda^2)^2\frac{r^2}{1-\lambda^2 r^2}\\[2mm]
&=&\frac{(1-\lambda^2)r}{1-\lambda r}+\lambda^p+(1-\lambda^2)\frac{r}{1-\lambda r}+\frac{1+\lambda r}{(1+\lambda)(1-r)}\frac{(1-\lambda^2)^2r^2}{1-\lambda^2 r^2}\\[2mm]
&=&\frac{(1-\lambda^2)r}{1-\lambda r}+\lambda^p+(1-\lambda^2)\frac{r}{1-\lambda r}\left(1+\frac{(1-\lambda) r}{1-r}\right)\\[2mm]
&=&\frac{(1-\lambda^2)r}{1-\lambda r}+\lambda^p+(1-\lambda^2)\frac{r}{1-r}=
\left\{\begin{array}{lll}
1+(1-\lambda)G_5(\lambda, r)&\text{for}~p=1,\\[2mm]
1+(1-\lambda^2)G_6(\lambda, r)&\text{for}~p=2,\end{array}\right.\eeas
where
\beas G_5(\lambda, r)=\frac{(1+\lambda)r}{1-\lambda r}+(1+\lambda)\frac{r}{1-r}-1\quad\text{and}\quad G_6(\lambda, r)=\frac{r}{1-\lambda r}+\frac{r}{1-r}-1.\eeas
It is evident that 
\beas &&\lim_{\lambda\to1^-}G_5(\lambda, r)=\frac{5r-1}{1-r}>0\quad\text{for}\quad r>r_1=\frac{1}{5}\\[2mm]\text{and}
&&\lim_{\lambda\to1^-}G_6(\lambda, r)=\frac{3r-1}{1-r}>0\quad\text{for}\quad r>r_2=\frac{1}{3},\eeas
which shows that the constant $r_p$ $(p=1, 2)$ cannot be improved. This completes the proof.
\end{proof}
\noindent The condition $\Vert a\Vert_\infty=|a_j|$ for each $j\in\{1,2,\ldots, m\}$ cannot be omitted from \textrm{Theorem \ref{Th2}}. To illustrate the importance of this condition, we present the following example.
\begin{exam} Consider the holomorphic mapping $F:\mathbb{B}_X\to\ol{\mathbb{D}^2}$ with
\[F(z)=\left(\frac{a_1-l_{z_0}(z)}{1-a_1 l_{z_0}(z)}, \frac{a_2-l_{z_0}(z)}{1-a_2 l_{z_0}(z)}\right),~z\in\mathbb{B}_X, \] 
where $1/\sqrt{2}<a_1<a_2<1$, $z_0\in\pa \mathbb{B}_X$ and $l_{z_0}\in T(z_0)$. Using similar argument as in \textrm{Example \ref{Exam1}}, for $r\in(0, 1)$, we have
\beas F(r z_0)=a+\sum_{n=1}^\infty P_n(r z_0),\eeas
where $a=(a_1, a_2)$ and $P_n(r z_0)=\left((a_1^2-1)a_1^{n-1}r^n, (a_2^2-1)a_2^{n-1}r^n\right)$ for $n\geq 1$. Thus, we have $\Vert P_n(r z_0)\Vert_\infty=(1-a_1^2)a_1^{n-1}r^n$ for $n=1,2, 3$
and $\Vert P_n(r z_0)\Vert_\infty\geq (1-a_2^2)a_2^{n-1}r^n$ for $n\geq 4$. It is evident that 
\beas  F(r z_0)-F(0)=\left(\frac{a_1-r}{1-a_1 r}-a_1, \frac{a_2-r}{1-a_2 r}-a_2\right)=\left(\frac{(a_1^2-1)r}{1-a_1 r}, \frac{(a_2^2-1)r}{1-a_2 r}\right),\eeas 
and hence, we have
\beas \Vert F(r z_0)-F(0)\Vert_\infty=\max_{1\leq i\leq 2}\left\{\left|\frac{(a_i^2-1)r}{1-a_i r}\right|\right\}\geq0\quad\text{and}\quad \Vert F(0)\Vert_\infty=\Vert a\Vert_\infty=a_2.\eeas
Thus, we have
\beas &&S(r z_0):=\Vert F(r z_0)-F(0)\Vert_\infty+\Vert F(0)\Vert_\infty^p+\sum_{n=1}^\infty \Vert P_n(r z_0)\Vert_\infty\\
&&+\left(\frac{1}{1+\Vert a\Vert_\infty}+\frac{\Vert r z_0\Vert_X}{1-\Vert r z_0\Vert_X}\right)\sum_{n=1}^\infty \Vert P_n(r z_0)\Vert_\infty^2\\ 
&=&\max_{1\leq i\leq 2}\left\{\frac{(1-a_i^2)r}{1-a_i r}\right\}+a_2^p+(1-a_1^2)r(1+a_1r+a_1^2 r^2)+\sum_{n=4}^\infty \Vert P_n(r z_0)\Vert_\infty\\[1mm]
&&+\left(\frac{1}{1+a_2}+\frac{r}{1-r}\right)\left((1-a_1^2)^2r^2(1+a_1^2r^2+a_1^4 r^4)+\sum_{n=4}^\infty \Vert P_n(z_0)\Vert_\infty^2 r^{2n}\right)\\
&\geq &\max_{1\leq i\leq 2}\left\{\frac{(1-a_i^2)r}{1-a_i r}\right\}+a_2^p+(1-a_1^2)r(1+a_1r+a_1^2 r^2)+(1-a_2^2)\sum_{n=4}^\infty a_2^{n-1}r^{n}\nonumber\\
&&+\frac{1+a_2 r}{(1-r)(1+a_2)}\left((1-a_1^2)^2r^2(1+a_1^2r^2+a_1^4 r^4)+(1-a_2^2)^2\sum_{n=4}^\infty a_2^{2(n-1)} r^{2n}\right)\\
&\geq &a_2^p+(1-a_1^2)r(1+a_1r+a_1^2 r^2)+(1-a_2^2)\frac{a_2^3 r^4}{1-a_2 r}\nonumber\\
&&+\frac{1+a_2 r}{(1-r)(1+a_2)}\left((1-a_1^2)^2r^2(1+a_1^2r^2+a_1^4 r^4)+(1-a_2^2)^2 \frac{a_2^6r^8}{1-a^2 r^2}\right).\eeas
Thus, for each $r\in(0, 1)$ and $a_2\to 1^-$, we have  
\beas S(r z_0)\geq 1+(1-a_1^2)r(1+a_1r+a_1^2 r^2)+\frac{(1+r)(1-a_1^2)^2r^2(1+a_1^2r^2+a_1^4 r^4)}{2(1-r)}>1.\eeas
\end{exam}
In the following result, we establish a refined version of the Bohr inequality for holomorphic mapping $F:\mathbb{B}_X\to\ol{\mathbb{D}^m}$, where $\Bbb{B}_X$ denote the unit ball of a complex Banach space $X$.
\begin{theo}\label{Th3}
Let $a=(a_1, a_2, \ldots, a_m)\in\ol{\mathbb{D}^m}$ and let $F: \mathbb{B}_X\to \ol{\mathbb{D}^m}$ be a holomorphic mapping with the following expansion
\beas F(z)=a+\sum_{n=1}^\infty P_n(z),\quad z\in \Bbb{B}_X,\eeas
where $P_n(z)=(1/n!) D^n F(0)(z^n)$. Assume that $|a_i|=\Vert a\Vert_\infty$ for all $i\in\{1, 2, \ldots,m\}$ and $\nu: \Bbb{B}_X\to \Bbb{B}_X$ is a Schwarz mapping with $\nu(0)=D \nu(0)=\cdots=D^{k-1} \nu(0)=0$, where $k\in\mathbb{N}$. Then, we have 
\bea\label{d1} \Vert F(\nu(z))\Vert_\infty+\sum_{n=1}^\infty \Vert P_n(z)\Vert_\infty+\left(\frac{1}{1+\Vert a\Vert_\infty}+\frac{\Vert z\Vert_X}{1-\Vert z\Vert_X}\right)\sum_{n=1}^\infty \Vert P_n(z)\Vert_\infty^2\leq 1\eea
for $\Vert z\Vert_X=r\leq r_k$, where $r_k\in(0, 1)$ is the positive root of the equation 
\beas \frac{1-r^{k}}{1+r^k}-\frac{2r}{1-r}= 0.\eeas
The number $r_k$ cannot be improved.
 \end{theo}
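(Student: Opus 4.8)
The plan is to combine the componentwise coefficient estimate used in Theorem~\ref{Th2} with the Schwarz--Pick bound of Lemma~\ref{lem3} applied at the point $\nu(z)$, using Lemma~\ref{lem4} to control $\Vert\nu(z)\Vert_X$. Fix $z_0\in\pa\mathbb{B}_X$, write $z=rz_0$ with $\Vert z\Vert_X=r$, and set $x=\Vert a\Vert_\infty\in[0,1]$. As in Theorem~\ref{Th2}, if $x=1$ then $\Vert P_n(z_0)\Vert_\infty\le 1-x^2=0$ forces $F\equiv a$ and (\ref{d1}) reduces to $\Vert a\Vert_\infty=1$; so I may assume $x\in[0,1)$.

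First I would handle the composition term. Since $\nu$ is a Schwarz mapping vanishing to order $k$ at the origin, Lemma~\ref{lem4} (with $M=1$ and $Y=X$) gives $\Vert\nu(z)\Vert_X\le\Vert z\Vert_X^k=r^k$. Viewing $\ol{\mathbb{D}^m}$ as the closed unit ball of $(\mathbb{C}^m,\Vert\cdot\Vert_\infty)$ and applying Lemma~\ref{lem3} to $F$ at $\nu(z)$, together with the monotonicity of $s\mapsto(x+s)/(1+xs)$, yields
\beas \Vert F(\nu(z))\Vert_\infty\le\frac{x+\Vert\nu(z)\Vert_X}{1+x\Vert\nu(z)\Vert_X}\le\frac{x+r^k}{1+xr^k}.\eeas
For the two coefficient sums I would repeat the componentwise argument of Theorem~\ref{Th2} verbatim: applying Lemma~\ref{2lem2} to each slice $g_i(t)=F_i(tz_0)$ and maximizing over the index attaining $\Vert P_n(z_0)\Vert_\infty$, and inserting $\Vert P_n(z)\Vert_\infty=r^n\Vert P_n(z_0)\Vert_\infty$, gives
\beas \sum_{n=1}^\infty\Vert P_n(z)\Vert_\infty+\left(\frac{1}{1+x}+\frac{r}{1-r}\right)\sum_{n=1}^\infty\Vert P_n(z)\Vert_\infty^2\le(1-x^2)\frac{r}{1-r}.\eeas

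Adding the two displays, it suffices to prove
\beas \frac{x+r^k}{1+xr^k}+(1-x^2)\frac{r}{1-r}\le 1.\eeas
Since $1-(x+r^k)/(1+xr^k)=(1-x)(1-r^k)/(1+xr^k)$, this is equivalent (after dividing by $1-x>0$) to $(1+x)(1+xr^k)\,r/(1-r)\le 1-r^k$. The left-hand side is increasing in $x$, so the worst case is $x\to1^-$, where the inequality becomes exactly $\tfrac{2r}{1-r}\le\tfrac{1-r^k}{1+r^k}$, that is $\tfrac{1-r^k}{1+r^k}-\tfrac{2r}{1-r}\ge 0$; as this left-hand side is strictly decreasing in $r$ with value $1$ at $r=0$, it is nonnegative precisely for $r\le r_k$. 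I expect this reduction---in particular the monotonicity-in-$x$ step that collapses the two-variable inequality onto the defining equation of $r_k$---to be the only genuinely delicate point.

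For sharpness I would take $F_3(z)=g(l_{z_0}(z))u$ with $g(t)=(\lambda-t)/(1-\lambda t)$, $\lambda\in(0,1)$, and $u\in\pa\mathbb{D}^m$ with $|u_1|=\dots=|u_m|=1$, exactly as in Theorem~\ref{Th2}, and pair it with the Schwarz mapping $\nu(z)=-\,l_{z_0}(z)^{k-1}z$, which is holomorphic, vanishes to order $k$, and satisfies $\Vert\nu(z)\Vert_X\le\Vert z\Vert_X^k<1$. By linearity $l_{z_0}(\nu(rz_0))=-l_{z_0}(rz_0)^k=-r^k$, so $\Vert F_3(\nu(rz_0))\Vert_\infty=(\lambda+r^k)/(1+\lambda r^k)$, while the coefficient computation of Theorem~\ref{Th2} shows the two sums equal $(1-\lambda^2)r/(1-r)$. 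Hence the left-hand side of (\ref{d1}) equals $\Phi(\lambda,r):=(\lambda+r^k)/(1+\lambda r^k)+(1-\lambda^2)r/(1-r)$, and a short computation gives
\beas \Phi(\lambda,r)-1=(1-\lambda)\left[\frac{(1+\lambda)r}{1-r}-\frac{1-r^k}{1+\lambda r^k}\right].\eeas
Letting $\lambda\to1^-$, the bracket tends to $\tfrac{2r}{1-r}-\tfrac{1-r^k}{1+r^k}$, which is positive for $r>r_k$; thus for any $r>r_k$ one can choose $\lambda$ close enough to $1$ to make (\ref{d1}) fail, showing that $r_k$ cannot be improved.
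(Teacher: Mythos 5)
Your proposal is correct and follows essentially the same route as the paper: the composition term is bounded by combining Lemma \ref{lem4} with Lemma \ref{lem3}, the two coefficient sums are bounded by the componentwise argument of Theorem \ref{Th2} via Lemma \ref{2lem2}, and the resulting two-variable inequality is settled by passing to the worst case $x\to 1^-$, with the same extremal pair (up to a sign) for sharpness. Your verification of $\frac{x+r^k}{1+xr^k}+(1-x^2)\frac{r}{1-r}\le 1$ by factoring out $1-x$ and observing monotonicity of $(1+x)(1+xr^k)$ is a slightly cleaner piece of algebra than the paper's second-derivative argument for $G_7$, but it is not a different method.
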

\begin{proof}
Let $z_0\in\pa \mathbb{B}_X$ be fixed. As $F(z)=(F_1(z),F_2(z),\ldots,F_m(z))$, we consider the function $g_i: \mathbb{D}\to\ol{\mathbb{D}}$ defined by 
\beas g_i(t):=F_i(t z_0)=a_i+\sum_{n=1}^\infty (P_{n})_i(z_0)t^n,\quad t\in\mathbb{D}\quad \text{and}\quad i\in\{1,2,\ldots,m\}.\eeas
In view of \textrm{Lemmas \ref{lem1}} and \ref{2lem2}, we have $|(P_{n})_i(z_0)|\leq 1-|a_i|^2=1-\Vert a\Vert^2_{\infty}$ for $n\geq 1$ and
\beas\sum_{n=1}^\infty \left|(P_{n})_i(z_0)\right| |t|^{n}+\left(\frac{1}{1+|a_i|}+\frac{|t|}{1-|t|}\right) \sum_{n=1}^\infty \left|(P_{n})_i(z_0)\right|^2 |t|^{2n}
&\leq& (1-|a_i|^2)\frac{|t|}{1-|t|}\\&=&(1-\Vert a\Vert_\infty^2)\frac{|t|}{1-|t|}\eeas
for each fixed $i\in\{1,2,\ldots,m\}$. Using the argument as in the proof of \textrm{Theorem \ref{Th2}}, we obtain the inequalities (\ref{c4}) and (\ref{c5}).
Let us consider the function $G(r)=(b+r)/(1+b r)$, where $b\geq 0$ and $0\leq r\leq r_0 (\leq1)$. Evidently, $G(r)$ is a monotonically increasing function of $r$ 
and hence, we have $G(r)\leq G(r_0)$. In view of \textrm{Lemmas \ref{lem3}} and \ref{lem4}, we have 
\bea\label{d2} \Vert F(\nu(r z_0))\Vert_\infty \leq \frac{\Vert F(0)\Vert_\infty +\Vert \nu(r z_0)\Vert_X}{1+\Vert F(0)\Vert_\infty \Vert \nu(r z_0)\Vert_X}\leq \frac{\Vert a\Vert_\infty +r^k}{1+\Vert a\Vert_\infty r^k},\quad r\in[0, 1). \eea 
Let $x=\Vert a\Vert_{\infty}\in[0,1]$. If $x=1$, then $P_n(z_0)=0$ for all $n\geq 1$ and hence, the inequality (\ref{d1}) holds for $z=rz_0$ and $r\in[0,1)$. Let $x\in[0,1)$. Using (\ref{c4}), (\ref{c5}) and (\ref{d2}), for $r\in(0,1)$, we obtain
\beas&&\Vert F(\nu(r z_0))\Vert_\infty+\sum_{n=1}^\infty \Vert P_n(r z_0)\Vert_\infty+\left(\frac{1}{1+\Vert a\Vert_\infty}+\frac{\Vert r z_0\Vert_X}{1-\Vert r z_0\Vert_X}\right)\sum_{n=1}^\infty \Vert P_n(z)\Vert_\infty^2\\
&\leq & \frac{x +r^k}{1+x r^k}+(1-x^2)\frac{r}{1-r}:=G_7(x, r).\eeas
To establish our result, it suffices to demonstrate that $G_7(x,r)\leq 1$.
Differentiating partially $G_7(x,r)$ twice with respect to $x$, we obtain
\beas \frac{\pa}{\pa x}G_7(x,r)=\frac{1-r^{2 k}}{\left(1+x r^k\right)^2}-\frac{2x r}{1-r}\quad\text{and}\quad \frac{\pa^2}{\pa x^2}G_7(x,r)=-\frac{2 r^k \left(1-r^{2 k}\right)}{\left(1+x r^k\right)^3}-\frac{2r}{1-r}\leq 0.\eeas
Thus, $\frac{\pa}{\pa x}G_7(x,r)$ is a monotonically decreasing function of $x\in[0, 1)$ and it follows that
\beas \frac{\pa}{\pa x}G_7(x,r)\geq \lim_{x\to1^-}\frac{\pa}{\pa x}G_2(x,r)=\frac{1-r^{k}}{1+r^k}-\frac{2r}{1-r}\geq 0,\eeas
for $r\leq r_k$, where $r_k\in(0, 1)$ is the positive root of the equation
\beas G_8(r):=\frac{1-r^{k}}{1+r^k}-\frac{2r}{1-r}= 0.\eeas
It is evident that $G_8(0)=1$ and $\lim_{r\to1^-} G_8(r)=-\infty$.
Therefore, $G_7(x,r)$ is a monotonically increasing function of $x\in [0, 1)$ for $r\leq r_k$ and it follows that
\beas G_7(x,r) \leq \lim_{x\to1^-}G_7(x,r)=1 \quad \text{for}\quad r\leq r_k.\eeas
\indent To prove the sharpness of the constant $r_k$, we consider the function $F_3: \mathbb{B}_X\to \ol{\mathbb{D}^m}$ defined by
\beas F_3(z)=g(l_{z_0}(z))u\quad \text{for}\quad z\in \Bbb{B}_X,\eeas
where $z_0\in \pa \Bbb{B}_X$, $l_{z_0}\in T(z_0)$, $u=(u_1,u_2,\dots,u_m)\in \pa \mathbb{D}^m$ with $|u_1|=|u_2|=\dots =|u_m|=1$, and 
\beas g(t)= \frac{\lambda+t}{1+\lambda t}, \quad t\in \mathbb{D}\quad\text{and}\quad \lambda \in (0,1).\eeas
Let $\nu(z)=z\left( l_{z_0}(z)\right)^{k-1}$.
Thus, we have 
\beas F_3(rz_0)=g(l_{z_0}(rz_0))u =g\left(r\Vert z_0\Vert_X\right)u=\frac{\lambda +r}{1+\lambda r}u= a+\sum_{n=1}^\infty P_n(rz_0),\eeas
where $a=\lambda u$ and $P_n(z_0)=(-1)^n\lambda^{n-1}(\lambda^2 -1)u$ for $n\geq 1$. 
Furthermore,
\beas F_3(\nu(rz_0))=F_3\left(rz_0 \left( l_{z_0}(r z_0)\right)^{k-1}\right)=F_3\left(r^k z_0\left(\Vert z_0\Vert_X\right)^{k-1}\right)=g(l_{z_0}(r^k z_0))u =\frac{\lambda +r^k}{1+\lambda r^k}u.\eeas
Therefore, we have
\beas &&\Vert F(\nu(r z_0))\Vert_\infty+\sum_{n=1}^\infty \Vert P_n(r z_0)\Vert_\infty+\left(\frac{1}{1+\Vert a\Vert_\infty}+\frac{\Vert r z_0\Vert_X}{1-\Vert r z_0\Vert_X}\right)\sum_{n=1}^\infty \Vert P_n(r z_0)\Vert_\infty^2\\[2mm]
&=&\left\Vert\frac{\lambda +r^k}{1+\lambda r^k}u\right\Vert_\infty+(1-\lambda^2 )\sum_{n=1}^\infty \lambda^{n-1} r^n+\left(\frac{1}{1+\Vert \lambda u\Vert_\infty}+\frac{r}{1-r}\right)(1-\lambda^2)^2\sum_{n=1}^\infty \lambda^{2(n-1)}r^{2n}\\[1mm]
&=&\frac{\lambda +r^k}{1+\lambda r^k}+(1-\lambda^2)\frac{r}{1-\lambda r}+\left(\frac{1}{1+\lambda}+\frac{r}{1-r}\right)(1-\lambda^2)^2\frac{r^2}{1-\lambda^2 r^2}\eeas
\beas&=&\frac{\lambda +r^k}{1+\lambda r^k}+(1-\lambda^2)\frac{r}{1-\lambda r}+\frac{1+\lambda r}{(1+\lambda)(1-r)}\frac{(1-\lambda^2)^2r^2}{1-\lambda^2 r^2}\hspace{4cm}\\[2mm]
&=&1+\left(\frac{\lambda +r^k}{1+\lambda r^k}-1\right)+(1-\lambda^2)\frac{r}{1-\lambda r}\left(1+\frac{(1-\lambda) r}{1-r}\right)\\[2mm]
&=&1+\left(\frac{\lambda +r^k}{1+\lambda r^k}-1\right)+(1-\lambda^2)\frac{r}{1-r}\\[2mm]
&=&1+(1-\lambda)G_9(\lambda, r)\eeas
where
\beas G_9(\lambda, r)=\frac{1}{1-\lambda}\left(\frac{\lambda +r^k}{1+\lambda r^k}-1\right)+(1+\lambda)\frac{r}{1-r}.\eeas
It is evident that 
\beas \lim_{\lambda\to1^-}G_9(\lambda, r)&=&\lim_{\lambda\to1^-}\frac{1}{1-\lambda}\left(\frac{\lambda +r^k}{1+\lambda r^k}-1\right)+2\frac{r}{1-r}\\[2mm]
&=&-\lim_{\lambda\to 1^-}\frac{1-r^{2 k}}{\left(1+\lambda r^k\right)^2}+\frac{2r}{1-r}=-\frac{1-r^{k}}{1+r^k}+\frac{2r}{1-r}>0\quad\text{for}\quad r>r_k,
\eeas
which shows that the constant $r_k$ cannot be improved. This completes the proof.
\end{proof}
\noindent The condition $\Vert a\Vert_\infty=|a_j|$ for each $j\in\{1,2,\ldots, m\}$ is crucial for \textrm{Theorem \ref{Th3}} and cannot be omitted. To illustrate the importance of this condition, we present the following example.
\begin{exam} Consider the holomorphic mapping $F:\mathbb{B}_X\to\ol{\mathbb{D}^2}$ with
\[F(z)=\left(\frac{a_1+l_{z_0}(z)}{1+a_1 l_{z_0}(z)}, \frac{a_2+l_{z_0}(z)}{1+a_2 l_{z_0}(z)}\right),~z\in\mathbb{B}_X, \] 
where $0<a_1<a_2<1$, $z_0\in\pa \mathbb{B}_X$ and $l_{z_0}\in T(z_0)$. Let $\nu(z)=z\left( l_{z_0}(z)\right)^{k-1}$. Using the argument as in \textrm{Example \ref{Exam1}}, for $r\in(0, 1)$, we have
\beas F(r z_0)=a+\sum_{n=1}^\infty P_n(r z_0),\eeas
where $a=(a_1, a_2)$ and $P_n(r z_0)=\left((-1)^n (a_1^2-1)a_1^{n-1}r^n, (-1)^n(a_2^2-1)a_2^{n-1}r^n\right)$ for $n\geq 1$. Thus, we have $\Vert P_1(r z_0)\Vert_\infty=(1-a_1^2)r$ and $\Vert P_n(r z_0)\Vert_\infty\geq (1-a_2^2)a_2^{n-1}r^n$ for $n\geq 2$.
Therefore,
\beas &&\Vert F(\nu(r z_0))\Vert_\infty+\sum_{n=1}^\infty \Vert P_n(r z_0)\Vert_\infty+\left(\frac{1}{1+\Vert a\Vert_\infty}+\frac{\Vert r z_0\Vert_X}{1-\Vert r z_0\Vert_X}\right)\sum_{n=1}^\infty \Vert P_n(r z_0)\Vert_\infty^2\\
&=&\frac{a_2 +r^k}{1+a_2 r^k}+(1-a_1^2)r+\sum_{n=2}^\infty \Vert P_n(r z_0)\Vert_\infty+\left(\frac{1}{1+a_2}+\frac{r}{1-r}\right)(1-a_1^2)^2r^2\\
&&+\left(\frac{1}{1+a_2}+\frac{r}{1-r}\right)\sum_{n=2}^\infty \Vert P_n(z_0)\Vert_\infty^2 r^{2n}\eeas
\beas&\geq &\left(\frac{a_2 +r}{1+a_2 r}\right)^2+(1-a_1^2) r+(1-a_2^2)\sum_{n=2}^\infty a_2^{n-1}r^{n}+\frac{1+a_2 r}{(1-r)(1+a_2)}(1-a_1^2)^2r^2\nonumber\\
&&+\frac{1+a_2 r}{(1-r)(1+a_2)}(1-a_2^2)^2\sum_{n=2}^\infty a_2^{2(n-1)} r^{2n}\\
&=&\left(\frac{a_2 +r}{1+a_2 r}\right)^2+(1-a_1^2)r+(1-a_2^2)\frac{a_2 r^2}{1-a_2r}+\frac{1+a_2 r}{(1-r)(1+a_2)}(1-a_1^2)^2r^2\\
&&+\frac{1+a_2 r}{(1-r)(1+a_2)}(1-a_2^2)^2\frac{a_2^2 r^4}{1-a_2^2 r^2}.\eeas
Thus, for each $r\in(0, 1)$ and $a_2\to 1^-$, we have  
\beas&& \Vert F(\nu(r z_0))\Vert_\infty+\sum_{n=1}^\infty \Vert P_n(r z_0)\Vert_\infty+\left(\frac{1}{1+\Vert a\Vert_\infty}+\frac{\Vert r z_0\Vert_X}{1-\Vert r z_0\Vert_X}\right)\sum_{n=1}^\infty \Vert P_n(r z_0)\Vert_\infty^2\\
&\geq& 1+(1-a_1^2)r+\frac{1+a_2 r}{(1-r)(1+a_2)}(1-a_1^2)^2r^2>1.\eeas
\end{exam}
\section*{Declarations:}
\noindent{\bf Acknowledgment:} First author is supported by the University Grants Commission (IN) fellowship (No. F. 44 - 1/2018 (SA - III)).\\[1mm]
{\bf Conflict of Interest:} Authors declare that they have no conflict of interest.\\[1mm]
{\bf Availability of data and materials:} Not applicable.\\

\end{document}